\newcommand{\abs}[1]{\vert#1\vert}
\def\norm#1{\left\Vert#1\right\Vert}
\def\R {{\mathbb R}}
\def\I {{\mathbb I}}
\def\T {{\mathbb T}}
\def\N{{\mathbb N}}
\def\e{{\varepsilon}}
\def\s{{\mathbb S}}
\def\H{{\mathcal H}}
\def\Ad{{\mathrm{Ad}\,}}
\def\RUCB{{\mbox{\rm RUCB}\,}}
\def\LUCB{{\mbox{\rm LUCB}\,}}
\newtheorem{theorem}{Theorem}[section]
\newtheorem{lemma}[theorem]{Lemma}
\theoremstyle{definition}
\theoremstyle{remark}
\newtheorem{remark}[theorem]{Remark}
\numberwithin{equation}{section}
\begin{document}

\begin{center}
\renewcommand{\thefootnote}{\fnsymbol{footnote}}
{\large\bf Amenability of finite energy path and loop groups}
\\[.6cm]

{\sc Vladimir G. Pestov}
\\[.5cm]

{Departamento de Matem\'atica, Universidade Federal da Para\'\i ba, Jo\~ao Pessoa, PB, 58051-900 Brasil}
\\[1mm]
{\em and}
\\[1mm]
Departement of Mathematics and Statistics, University of Ottawa,
Ottawa, ON \\
K1N 6N5 Canada \footnote[5]{Emeritus Professor.}
\\[1mm]
vladimir.pestov@uottawa.ca
\\[.5cm]
\end{center}
\hrule
\vskip .4cm

{\footnotesize\bf Abstract}

\begin{quote}
{\footnotesize
\noindent
It is shown that the groups of finite energy (that is, Sobolev class $H^1$) paths and loops with values in a compact Lie group are amenable in the sense of Pierre de la Harpe, that is, every continuous action of such a group on a compact space admits an invariant regular Borel probability measure. To our knowledge, the strongest previously known result concerned the amenability of groups of continuous paths and loops (Malliavin and Malliavin 1992).}
\end{quote}
\vskip .4cm

\hrule 

\vskip 1cm

\section{Introduction}

Amenability of a locally compact group admits numerous equivalent definitions, see \cite{greenleaf,GdlH}. For more general topological groups they are not necessarily pairwise equivalent, and the property that is usually called amenability is this \cite{E}: a topological group $G$ is amenable if every continuous action of $G$ on a compact space $X$ admits an invariant regular Borel probability measure.
This is the concept we are interested in in this article. 

Carey and Grundling \cite{CG} discuss the existence of the gauge-invariant vacuum state for gauge field theories. As argued by the authors, in certain settings the existence of such a state can be derived from the amenability of the gauge transformation group.
The question asked in the above paper is therefore this: if $X$ is a compact Riemannian manifold, are the groups of currents $C^{\infty}(X,SU(n))$, $C^{k}(X,SU(n))$ and $H^k(X,SU(n))$, with their natural topologies, amenable? Of course the same can be asked about more general gauge transformation groups of vertical automorphisms of a principal $K$-fibre bundle, where $K$ is a compact connected Lie group. 

The question appears to be still open for all values of $k\geq 0$, with the only exception of groups of continuous loops $C(\s^1,K)$ and continuous paths $C(\I,K)$ with values in a compact Lie group $K$. Amenability of those groups follows from a result of Malliavin and Malliavin \cite{MM}, see an explanation in \cite{P20}, Sect. 2. Also note that if we reduce the class of smoothness even further and consider groups of measurable maps with the topology of convergence in measure, we get a very strong version of amenability called extreme amenability \cite{Gla98,PS1}.

Another source of interest in amenability of groups of maps with values in compact Lie groups (in the $C^0$ case) is the open problem of describing those $C^\ast$-algebras whose unitary groups are amenable in the norm topology. See \cite{AST,O} for the latest developments in this direction.

Here is the main result of this note.

\begin{theorem}
Let $K$ be a compact Lie group. The groups $H^1(\I,K)$ and $H^1(\s^1,K)$ of finite energy (that is, Sobolev class $H^1$) paths and loops, with the $H^1$-topology, are amenable. The same is true of their based versions, $H_0^1(\I,K)$ and $H_0^1(\s^1,K)$.
\label{th:main}
\end{theorem}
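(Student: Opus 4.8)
The plan is to isolate one group whose amenability yields all four, to recast amenability as the existence of an asymptotically invariant net of measures, and then to manufacture such a net from $K$-valued Brownian motion, whose natural class of quasi-invariance is exactly the finite energy class. First I would dispose of the non-based and the path cases by permanence properties. Evaluation at the base point gives short exact sequences of topological groups
$$ 1 \to H^1_0(\I,K) \to H^1(\I,K) \xrightarrow{\mathrm{ev}_0} K \to 1, \qquad 1 \to H^1_0(\s^1,K) \to H^1(\s^1,K) \xrightarrow{\mathrm{ev}_0} K \to 1, $$
and, for based paths, evaluation at the far endpoint gives $1\to H^1_0(\s^1,K)\to H^1_0(\I,K)\xrightarrow{\mathrm{ev}_1} K\to 1$, the kernel being precisely the based loops. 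Since $K$ is compact, hence amenable, and de la Harpe amenability is inherited by extensions of an amenable group by an amenable group, all four groups are amenable as soon as the based loop group $G:=H^1_0(\s^1,K)$ is. So I would concentrate entirely on $G$.

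Next I would use the standard reformulation that $G$ is amenable if and only if $\RUCB(G)$ carries a left invariant mean, and reduce this in turn to producing a net $(\mu_\alpha)$ of regular Borel probability measures with $\norm{g\cdot\mu_\alpha-\mu_\alpha}\to 0$ for every $g\in G$: pushing such a net forward along an orbit map $g\mapsto g\cdot x$ of any continuous action on a compact space and passing to a weak$^\ast$ cluster point then produces the desired invariant measure. The task thus becomes the construction of an asymptotically (left) invariant net on $G$.

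This is where finite energy enters decisively. Under the logarithmic derivative (Cartan development) $\gamma\mapsto\gamma^{-1}\dot\gamma$ the ambient based path group is identified with $L^2(\I,\mathfrak{k})$, with multiplication the affine-orthogonal law $(u,v)\mapsto \Ad(\delta^{-1})u+v$, where $\delta$ is the development of $v$ and, crucially, $\Ad(\delta(t)^{-1})$ is pointwise orthogonal because $K$ is compact; the based loop group $G$ is the holonomy-one fibre inside this model. The finite energy loops are exactly the Cameron-Martin space of the pinned Wiener (Brownian bridge) measure $\mathbb{W}$ on continuous loops, and by the Driver-Malliavin quasi-invariance theorem $\mathbb{W}$ is quasi-invariant under translation by $G$, with Radon-Nikodym cocycle a Cameron-Martin-Girsanov exponential. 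Running the bridge at diffusion rate $s$ and letting $s\to\infty$ makes a fixed finite energy translation relatively negligible, so the Radon-Nikodym derivatives tend to $1$ in $L^1$ and the rescaled measures $\mathbb{W}_s$ become asymptotically invariant under $G$.

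The main obstacle, which I expect to be the technical heart of the argument, is that these asymptotically invariant measures live on the continuous completion and give full mass to the complement of $G$ (Brownian paths are almost surely not of class $H^1$), so they are not measures on $G$ and cannot be pushed along an orbit map directly; equivalently, any honest Gaussian supported on $L^2$ must have trace-class covariance, which cannot be invariant under all the orthogonal twists $\Ad(\delta^{-1})$. I plan to resolve this at the level of means rather than measures: given $f\in\RUCB(G)$, approximate it by a $\Lip$ function depending on finitely many development coordinates, integrate against the isotropic reference (which, being invariant under pointwise-orthogonal transformations, absorbs the twist exactly), exploit the affine-orthogonal form of right translations together with the density of smooth loops in $G$ to show that the resulting functionals are asymptotically invariant, and take an ultralimit to obtain a left invariant mean on $\RUCB(G)$. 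Reconciling the required $\Ad$-invariance of the reference with the finer finite energy topology, uniformly in the development coordinate, is exactly where the delicate estimates will concentrate.
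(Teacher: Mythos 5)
Your reduction to the based loop group is logically valid: the three evaluation sequences are correct, extensions of an amenable group by an amenable (here compact) quotient are amenable (\cite{E}, II.1), so amenability of $H_0^1(\s^1,K)$ would indeed give all four groups. Note, however, that this runs the deduction in the \emph{opposite} direction from the paper, which proves amenability of the based \emph{path} group directly and then must pass \emph{down} to the based loop group, a normal co-compact subgroup; since amenability does not in general pass to subgroups of Polish groups, that descent requires a genuinely new result (Theorem \ref{th:coco}, proved via induced actions and Rokhlin disintegration). Your direction dispenses with that theorem, but at the price of having to run the direct construction on the loop group itself, which is strictly harder: in the $L^2(\I,\mathfrak{k})$ model the loop condition (product integral/holonomy equal to $e$) is a nonlinear, codimension-$\dim K$ constraint, so the linear finite-dimensional structure that any known construction of asymptotically invariant measures exploits is destroyed.

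The genuine gap is at your central step. The mechanism you propose --- pinned Wiener measure at diffusion rate $s\to\infty$, with Cameron--Martin--Girsanov quasi-invariance under $H^1$ translations --- is precisely the Malliavin--Malliavin argument \cite{MM}, and it proves amenability of $C(\s^1,K)$, not of $H^1(\s^1,K)$: as you yourself observe, these measures charge only the complement of the $H^1$ group, so they can only produce a mean on functions uniformly continuous for the sup-norm uniformity, which is a proper subspace of $\RUCB(H_0^1(\s^1,K))$ because the $H^1$ uniformity is strictly finer. This is exactly why the problem remained open after \cite{MM}. Your proposed repair (``integrate against the isotropic reference'' on finitely many development coordinates) is not carried out, and is in effect a gesture toward what the paper actually does: take normalized Lebesgue measure on balls of radius $R_N$ in the space of step functions constant on the uniform partition, with the critical scaling window $R_N=\omega(\sqrt N)\cap o(N)$. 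The asymptotic \emph{right}-invariance of those measures (Lemma \ref{l:main}) is not formal; it needs the splitting $f\ast g=(f+\Ad_{\rho_N^f}g)+(\Ad_{\Pi\exp f}g-\Ad_{\rho_N^f}g)$, measure-preservation of the twisted translation $f\mapsto f+\Ad_{\rho_N^f}g$ and of the group inverse (triangular-Jacobian arguments), a L\'evy concentration estimate for the angle between $f$ and $\Ad_{\rho_N^f}g$, a resulting total-variation bound, and finally the upgrade from Lipschitz test functions to all of $\LUCB$ --- none of which, nor any substitute, appears in your sketch; and even these estimates, as they exist, live on the path group rather than the loop group you reduced to. So the proposal identifies the right obstacle but does not overcome it.
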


Amenability of a topological group $G$ can be reformulated in terms of invariant means as follows. Recall that a function $f$ on a topological group is called {\em right uniformly continuous} if for every $\e>0$ there is a neighbourhood of the identity, $V$, so that $\abs{f(x)-f(y)}<\e$ when $xy^{-1}\in V$. The collection of all bounded right uniformly continuous functions on $G$ (with the supremum norm) forms an abelian $C^\ast$-algebra, denoted $\RUCB(G)$. A topological group $G$ is amenable if and only if there exists a left-invariant mean on $\RUCB(G)$, that is, a positive linear functional of norm one invariant under left translations by elements of $G$. 

Denote $\LUCB(G)$ the space of bounded {\em left} uniformly continuous functions, that is, those satisfying: for every $\e>0$ there is a neighbourhood of the identity, $V$, so that $\abs{f(x)-f(y)}<\e$ when $x^{-1}y\in V$. 
The inverse element map $g\mapsto g^{-1}$ establishes an isometric isomorphism between the spaces of the left and the right uniformly continuous functions, and sends left-invariant means to the right-invariant ones. It follows immediately that a topological group $G$ is amenable if and only if it admits a {\em right-invariant} mean on the space of {\em left} uniformly continuous bounded functions, $\LUCB(G)$. We will use this reformulation in the proof of our main Theorem \ref{th:main}.

Now let us call a topological group {\em skew-amenable} if there exists a {\em left-invariant} mean on the algebra $\LUCB(G)$ of bounded left uniformly continuous functions. Even if formally the two definitions look very similar, the behaviour of amenable and skew-amenable groups differs significantly when $G$ is not locally compact. See \cite{P20,JS}. (For locally compact groups, skew-amenability is equivalent to amenability, see Th. 2.2.1 in \cite{greenleaf}.) It is interesting that skew-amenability also appears in the study of the problem of amenability of unitary groups of $C^\ast$-algebras \cite{AST,O}. 

It is easy to give examples of amenable groups that are not skew-amenable (such is the unitary group $U(\ell^2)$ with the strong topology, see \cite{P20}). The existence of skew-amenable non-amenable groups was only recently demonstrated by Ozawa, who has produced a large collection of very interesting naturally occurring examples, in particular the unitary groups of hyperfinite $II_1$-factors with their weak topology \cite{O}.

Given a unitary representation $\pi$ of a group $G$ in a Hilbert space $\H$, a state $\phi$ on the algebra $B(\H)$ (that is, a positive linear functional with $\phi(1)=1$) is {\em invariant} (under $\pi$) if $\phi(\pi_{g^\ast}T\pi_g)=\phi(T)$ for every bounded operator $T$ and each $g\in G$. A unitary representation admitting an invariant state is called amenable in the sense of Bekka \cite{Bek1}. It was observed by Thierry Giordano and this author in \cite{GP07}, Prop. 4.5 that every strongly continuous unitary representation of a skew-amenable topological group $G$ admits an invariant state.
In view of this, the following result by the present author 
\cite{P20} is of interest.

\begin{theorem}[\cite{P20}]
Let $K$ be a compact Lie group. The groups $H^1(\I,K)$ and $H^1(\s^1,K)$ of finite energy paths and loops are skew-amenable. The same is true of their based versions, $H_0^1(\I,K)$ and $H_0^1(\s^1,K)$.
\label{th:skew}
\end{theorem}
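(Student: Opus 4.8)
The plan is to use the reformulation provided above: it suffices to construct a \emph{right}-invariant mean on $\LUCB(G)$ for each of the four groups $G$. I would first reduce everything to the based path group $H_0^1(\I,K)$ and the based loop group $H_0^1(\s^1,K)$. Indeed, evaluation at the basepoint is a continuous homomorphism of $H^1(\I,K)$ onto $K$ with kernel $H_0^1(\I,K)$, realizing the full path group as the semidirect product $H_0^1(\I,K)\rtimes K$; since a topological-group extension with amenable normal subgroup and amenable quotient is again amenable, and $K$ is compact, the full path group follows once the based one is settled, and likewise for loops. The based loop group requires separate care, because it sits inside the based path group as the kernel of $g\mapsto g(1)$ — the \emph{wrong} direction for an extension argument — so I would treat it directly, by the same construction carried out on $\s^1$ (equivalently, on the holonomy-trivial subset of the development below).

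For the core case $G=H_0^1(\I,K)$, I would linearize the group using the right logarithmic derivative (Cartan development) $\delta(g)=g^{-1}\dot g$, a homeomorphism of $G$ onto the Hilbert space $\H=L^2(\I,\mathfrak{k})$, where $\mathfrak{k}=\mathrm{Lie}(K)$ carries an $\Ad$-invariant inner product. A direct computation gives $\delta(gh)=\Ad(h^{-1})\delta(g)+\delta(h)$, so that under $\delta$ the \emph{right} translation $R_h\colon g\mapsto gh$ becomes the affine isometry $A_h\colon\xi\mapsto\Ad(h^{-1})\xi+\delta(h)$ of $\H$ (the linear part is orthogonal, since the inner product on $\mathfrak{k}$ is $\Ad$-invariant). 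Thus $G$ acts on $\H$ by affine isometries, with right translation corresponding exactly to this affine action; it is precisely this action under which I must produce an invariant mean.

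I would then obtain the right-invariant mean as an ultralimit of an \emph{asymptotically right-invariant} net of Borel probability measures on $G$. Transported by $\delta$ to $\H$, the requirement is a net $(\nu_\alpha)$ of probability measures with $\int F\circ A_h\,d\nu_\alpha-\int F\,d\nu_\alpha\to0$ for every $h\in G$ and every bounded $F$ arising from $\LUCB(G)$; any weak$^\ast$ cluster point of $F\mapsto\int F\,d\nu_\alpha$ is then the desired invariant mean. The natural candidates are Gaussian (heat-kernel) measures of variance tending to infinity: a rotationally symmetric Gaussian is \emph{exactly} invariant under the orthogonal part $\Ad(h^{-1})$ and \emph{asymptotically} invariant under the translation part $\delta(h)$ as the variance grows, so that in the limit both components of $A_h$ are absorbed.

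The hard part is that this last step is genuinely infinite-dimensional. There is no rotation-invariant Gaussian on $\H=L^2$, and the Wiener (heat-kernel) measures that \emph{are} fully quasi-invariant under the affine action — by the Cameron--Martin--Driver quasi-invariance underlying the Malliavin--Malliavin result — live on the larger group $C_0(\I,K)$ of \emph{continuous} paths, where the finite-energy subgroup $H_0^1(\I,K)$ is a null set and hence cannot be integrated against functions pulled back from $G$. One is therefore forced to build the asymptotically invariant measures on $G$ itself. I expect the main obstacle to be reconciling two competing structures: the affine map $A_h$ mixes directions of $\H$ through $\Ad(h^{-1})$, which resists any fixed finite-dimensional exhaustion, while the left-uniform structure of $G$ pulls back to a \emph{twisted}, non-translation-invariant uniformity on $\H$ (since $\delta(g_1^{-1}g_2)=\delta(g_2)-\Ad(g_2^{-1}g_1)\delta(g_1)$), so that functions in $\LUCB(G)$ are not uniformly continuous for the metric of $\H$. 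Overcoming this should require a careful finite-dimensional approximation combined with a diagonal/ultrafilter limit and Cameron--Martin-type estimates on the Radon--Nikodym cocycle, using the uniform continuity of $\LUCB$ functions to tame the $\Ad$-twist. This is where the real work of the theorem lies.
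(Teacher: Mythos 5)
Your opening move is already the wrong one, and the error propagates through the whole plan. A \emph{right}-invariant mean on $\LUCB(G)$ is the paper's criterion for \emph{amenability} (Theorem \ref{th:main}); skew-amenability, which is what Theorem \ref{th:skew} asserts, means by definition a \emph{left}-invariant mean on $\LUCB(G)$ --- equivalently, after composing with $g\mapsto g^{-1}$, a \emph{right}-invariant mean on $\RUCB(G)$. This conflation is not pedantic, because it is exactly what derails your final paragraph: the obstacle you single out as ``where the real work of the theorem lies'' --- that functions in $\LUCB(G)$ pull back under the development to functions that are \emph{not} uniformly continuous for the Hilbert norm, the left uniformity becoming a twisted, non-translation-invariant one --- is precisely the extra difficulty of the \emph{amenability} theorem, the one this paper spends all of Section \ref{s:asym} overcoming (the splitting (\ref{eq:split}), the Jacobian lemmas, the angle-concentration estimate). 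It is not an obstacle for skew-amenability at all, and by leaving it unresolved you have proved neither statement. The same conflation infects your reduction to the based group: the extension theorem you invoke (amenable-by-amenable is amenable) is a statement about amenability; its skew-amenable counterpart is not automatic and requires the separate permanence results of \cite{P20,PS2}.

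The irony is that your geometric setup is the right one for Theorem \ref{th:skew}; you have simply aimed it at the wrong function space. With your development $\delta(g)=g^{-1}\dot g$ (in the paper's terminology this is the \emph{left} logarithmic derivative), the metric $\norm{\delta(g_1)-\delta(g_2)}_2$ is \emph{right}-invariant, so the bounded functions uniformly continuous for it are exactly those in $\RUCB(G)$; a net of probability measures asymptotically invariant under your affine isometries $A_h$ in the mass-transportation distance therefore yields, in the ultralimit, a right-invariant mean on $\RUCB(G)$, i.e., skew-amenability --- no twisted uniformity ever enters. (This is the mirror image of the actual argument of \cite{P20}, which uses $\dot g\,g^{-1}$, so that \emph{left} translations become the affine isometries and the mean lives on $\LUCB$.) Even after this correction two genuine gaps remain. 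First, the measures: as you note yourself, no rotation-invariant Gaussian exists on $\H$ and Wiener measure assigns $H_0^1$ measure zero; what actually works, and is the real content of \cite{P20}, is the normalized Lebesgue measure on balls $R_NB_N$ in the finite-dimensional spaces of step functions with $R_N=\omega(\sqrt N)\cap o(N)$, together with quantitative transport estimates of the kind proved in Section \ref{s:asym} --- your ``finite-dimensional approximation with Cameron--Martin-type estimates'' is a gesture, not an argument. Second, the loop groups: ``the same construction carried out on $\s^1$'' collides with the holonomy constraint, since developments of loops form the nonlinear subset of $L^2(\s^1,{\mathfrak k})$ whose product integral around the circle is trivial, not a Hilbert space, and you say nothing about how to put asymptotically invariant measures on it; both \cite{P20} and this paper instead deduce the loop case from the path case by a co-compact-subgroup inheritance theorem (here Theorem \ref{th:coco}, with its skew-amenable analogue in \cite{P20}).
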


Theorems \ref{th:main} and \ref{th:skew} together imply the following.

\begin{theorem}
Let $K$ be a compact Lie group. The groups $H^1(\I,K)$ and $H^1(\s^1,K)$ of finite energy paths and loops admit a bi-invariant mean on the space of bounded right uniformly continuous functions. The same is true of their based versions, $H_0^1(\I,K)$ and $H_0^1(\s^1,K)$.
\label{th:bi}
\end{theorem}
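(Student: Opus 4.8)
The plan is to prove Theorem~\ref{th:bi} by combining the two amenability results, Theorems~\ref{th:main} and~\ref{th:skew}, for each of the four groups $G\in\{H^1(\I,K),H^1(\s^1,K),H_0^1(\I,K),H_0^1(\s^1,K)\}$. By Theorem~\ref{th:main}, $G$ is amenable, and by the reformulation discussed in the introduction this is equivalent to the existence of a \emph{right-invariant} mean $m_R$ on $\LUCB(G)$. By Theorem~\ref{th:skew}, $G$ is skew-amenable, so there is a \emph{left-invariant} mean $m_L$ on the same algebra $\LUCB(G)$. The goal is to produce a single mean on $\RUCB(G)$ that is invariant under both left and right translations. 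Since the inverse map $g\mapsto g^{-1}$ induces an isometric isomorphism $\LUCB(G)\to\RUCB(G)$ carrying a two-sided invariant mean on one algebra to a two-sided invariant mean on the other, it suffices to build a bi-invariant mean on $\LUCB(G)$ out of $m_L$ and $m_R$.

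First I would carry out the standard averaging construction for combining a left-invariant and a right-invariant mean into a bi-invariant one, the same argument used classically for discrete and locally compact groups. The idea is to iterate the two one-sided means: for $f\in\LUCB(G)$, apply $m_R$ to the function $g\mapsto f(g\,\cdot\,)$ in one variable and $m_L$ to the resulting function of the other variable, i.e. form $m(f)=m_L^{(x)}\bigl(m_R^{(y)}f(xy)\bigr)$. For this to make sense I must verify that the inner average produces a function lying in the domain on which the outer mean acts; concretely, that for each fixed $x$ the translate $y\mapsto f(xy)$ is in $\LUCB(G)$ (immediate, since left translations preserve $\LUCB(G)$), and that $x\mapsto m_R^{(y)}f(xy)$ is again left uniformly continuous and bounded. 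That the resulting $m$ is a mean (positive, norm one, linear) is then routine, and its left-invariance comes from the left-invariance of $m_L$ while its right-invariance comes from the right-invariance of $m_R$ applied to the inner variable.

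The main obstacle I anticipate is precisely the measurability/continuity bookkeeping in the iterated-mean step: means on $\LUCB(G)$ are only finitely additive functionals, not integrals, so one cannot invoke Fubini, and one must check by hand that $x\mapsto m_R^{(y)}f(xy)$ lands in $\LUCB(G)$ so that $m_L$ can be applied to it. The key estimate is a uniform-continuity transfer: if $f$ is left uniformly continuous with modulus controlled by a neighbourhood $V$ of the identity, then because left and right translations interact through the group multiplication, the partially averaged function inherits an analogous modulus of continuity. I would isolate this as the one genuine lemma, verifying that for $x^{-1}x'\in V$ the functions $y\mapsto f(xy)$ and $y\mapsto f(x'y)$ are uniformly close, so that applying the positive norm-one functional $m_R$ preserves the bound $\abs{m_R^{(y)}f(xy)-m_R^{(y)}f(x'y)}<\e$.

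Finally, having produced a bi-invariant mean on $\LUCB(G)$, I would transport it through the inverse-map isomorphism to obtain the desired bi-invariant mean on $\RUCB(G)$, completing the proof for all four groups simultaneously. I expect this last transport step to be purely formal, so that the entire content of the theorem reduces to the combination lemma of the previous paragraph together with the two inputs supplied by Theorems~\ref{th:main} and~\ref{th:skew}.
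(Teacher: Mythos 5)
Your overall plan---combining Theorems \ref{th:main} and \ref{th:skew} by convolving a left-invariant and a right-invariant mean---is exactly the alternative route the paper itself mentions (in the remark following Lemma \ref{l:amskew}, attributed to F.~M.~Schneider, with references to Greenleaf, Berglund--Junghenn--Milnes, and Schneider--Thom); the paper's actual proof of the combination statement is instead a fixed-point argument. However, as you have set the convolution up, the key lemma you isolate is false, and the gap is fatal rather than a matter of bookkeeping. You need the partially averaged function $F(x)=m_R^{(y)}\bigl(f(xy)\bigr)$ to lie in $\LUCB(G)$ so that $m_L$ can be applied to it, and you propose to prove this by showing that for $f\in\LUCB(G)$ and $x^{-1}x'\in V$ the functions $y\mapsto f(xy)$ and $y\mapsto f(x'y)$ are uniformly close. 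But $(xy)^{-1}(x'y)=y^{-1}(x^{-1}x')y$, so the left uniform continuity of $f$ controls $\abs{f(xy)-f(x'y)}$ only if all conjugates $y^{-1}(x^{-1}x')y$ stay inside the modulus neighbourhood---that is, only in a SIN group. Indeed, specializing your claim to $x=e$, $x'=v\in V$ gives $\sup_y\abs{f(vy)-f(y)}\le\e$, which is precisely the assertion that $f$ is \emph{right} uniformly continuous. The paper notes at the end of Section \ref{s:prelim} (citing Prop.~7 of \cite{P20} and Protasov) that for non-abelian $K$ these groups are not SIN and $\LUCB\neq\RUCB$; for $f\in\LUCB(G)\setminus\RUCB(G)$ your function $F$ need not be left uniformly continuous (it need not even be continuous), so $m_L(F)$ is undefined and the iterated mean collapses.

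The error is in the pairing of the means with the variables, and it is fixable by swapping them: define instead $m(f)=m_R^{(x)}\bigl(m_L^{(y)}f(yx)\bigr)$, i.e.\ average the \emph{left-hand} factor first, with the \emph{left}-invariant mean. For $f\in\LUCB(G)$ each $y\mapsto f(yx)$ is again in $\LUCB(G)$, and $G(x)=m_L^{(y)}f(yx)$ satisfies $\abs{G(x)-G(x')}\le\sup_y\abs{f(yx)-f(yx')}\le\e$ whenever $x^{-1}x'\in V$, because $(yx)^{-1}(yx')=x^{-1}x'$; thus $G\in\LUCB(G)$ with the same modulus as $f$, and $m_R$ applies. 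Left invariance of $m$ then follows because $m_L$ absorbs the inner left translation (replacing $f$ by $f(g\,\cdot)$ replaces $f(yx)$ by $f(gyx)$), right invariance follows from $m_R$ because replacing $f$ by $f(\cdot\,g)$ turns $G$ into its right translate $x\mapsto G(xg)$, and the transport to $\RUCB(G)$ through inversion is formal, as you say. For comparison, the paper's Lemma \ref{l:amskew} reaches the same conclusion by noting that the set of right-invariant means on $\RUCB(G)$ (nonempty by skew-amenability plus inversion) is a weak$^\ast$ compact convex set invariant under the left $G$-action, so amenability's fixed-point property yields a bi-invariant element. Both routes are legitimate; the convolution is the shorter one, but only with the correct pairing, which is forced precisely by the uniform-continuity transfer you tried to prove.
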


Notice that the same holds for the groups $C(\I,K)$ and $C(\s^1,K)$ because they are SIN groups, that is, the left and the right uniform structures on them coincide, and so right and left uniformly continuous functions are the same.

In the above results, it is not even necessary to assume that $K$ is connected. The result for an arbitrary compact Lie group follows in a somewhat trivial way, because then the groups of paths and loops are just extensions of the corresponding groups with values in the connected component of $K$ by a finite subgroup. Such an extension preserves amenability, skew-amenability, the existence of required invariant means etc., so for the proofs, we will always assume $K$ to be connected, as this is the only interesting case.

The paper by Malliavin and Malliavin \cite{MM} had in fact established a stronger property of the groups $C(\I,K)$ and $C(\s^1,K)$ than mere amenability: they admit a mean on the space of all bounded Borel measurable functions that is bi-invariant under the action of the subgroup of all $C^1$-paths (resp., $C^1$-loops). We do not know if the groups $H^1(\I,K)$ and $H^1(\s^1,K)$ admit a mean on the bounded Borel functions that is bi-invariant (or: just invariant on the left) under the action of $H^2$-paths/loops.

For a more detailed discussion of those concepts of amenability as well as more references on the subject, see \cite{PS2}.

The preliminaries are collected in Section \ref{s:prelim}, while
Section \ref{s:asym} contains the main technical construction, that of a sequence of probability measures on the group of based paths that are asymptotically invariant with regard to the mass transportation distance. From there, we deduce the amenability of path groups in the short Section \ref{s:proofs}. Amenability of loop groups, proved in Section \ref{s:loop}, requires the following, apparently new, observation (Theorem \ref{th:coco}): a co-compact normal subgroup of an amenable Polish group is amenable. 

\section{Preliminaries}
\label{s:prelim}

Here we will present a summary of necessary concepts and results related to the groups of finite energy paths and loops, referring the reader to a somewhat more detailed presentation with references in \cite{P20}, Sect. 3.

Let $K$ be a compact connected Lie group, and $f\colon \I\to K$ a function. The value of the {\em right logarithmic derivative} of $f$ at a point $t\in\I$ of smoothness of $f$ is an element of the Lie algebra $\mathfrak k$ of $K$, given by
\begin{equation}
\partial^{log}f(t) = f^{\prime}(t)\cdot f(t)^{-1},
\label{eq:rightlog}
\end{equation}
the image of the derivative $f^{\prime}(t)\in T_{f(t)}K$ under the right translation by $f(t)^{-1}$.
We will always assume that $K$ is realized as a matrix subgroup of the orthogonal group, in which case on the right we have a product of matrices.

Now equip the Lie algebra $\mathfrak k$ with an $\Ad$-invariant inner product.
An absolutely continuous mapping $f\colon \I\to K$ has the right logarithmic derivative defined almost everywhere, and is said to have {\em finite energy} if
\[\int_0^1 \norm{\partial^{log}f(t)}^2dt<\infty.\]
In the matrix case, it is equivalent to saying that $f$ belongs to the Sobolev class $H^1$. The family of all maps $\I\to K$ of finite energy forms a group under the pointwise multiplication, the finite energy path group, denoted $H^1(\I,K)$. 

The right logarithmic derivative maps $H^1(\I,K)$ to $L^2(\I,{\mathfrak k})$. For any two elements $f,g\in H^1(\I,K)$, a direct calculation verifies the cocycle property: for a.e. $t\in\I$,
\begin{equation}
\partial^{log}(fg)(t)=\partial^{log}f(t) + \Ad_{f(t)}\partial^{log}g(t).
\label{eq:cocycle}
\end{equation}
In the case where $K$ is a matrix group, the adjoint representation is given by conjugation $\Ad_fg=fgf^{-1}$.

The right logarithmic derivative becomes injective if restricted to the subgroup $H_0^1(\I,K)$ of based paths, defined by the property $f(0)=e$. In this case, it is even a bijection, with the inverse map given by the product integral $\prod_0^tf(s)\,ds$. For a step function $f$ assuming constant values in $\mathfrak k$ on each interval $[t_i,t_{i+1})$, $i=0,1,\ldots,n-1$, the product integral of $f$ between $0$ and $t\in [t_j,t_{j+1})$ is defined by
\begin{equation}
\prod_0^t\exp f(s)\,ds =\exp_G (t-t_j)f(t_j)\exp_G(t_j-t_{j-1})f(t_{j-1})\ldots\exp_G t_1f(0).
\label{eq:prod}
\end{equation}
Afterwards, the product integral is extended over all $L^1$-functions by continuity. In particular, it is well defined on the $L^2$ functions. (See \cite{DF}, in particular Thm. 1.2, and Sect. 1.8, p. 55, eq. (8.6).)

 The formula
\[d(f,g)=\norm{\partial^{log}f-\partial^{log}g}_2\]
defines a metric on $H_0^1(\I,K)$. Eq. (\ref{eq:cocycle}) implies that this metric is left-invariant: $d(hf,hg)=d(f,g)$. One can verify that it turns $H_0^1(\I,K)$ into a topological group. Since the metric is complete and separable, $H_0^1(\I,K)$ is a Polish group, in fact, it is also a Banach--Lie group (\cite{AHKMT}, Sect. 1.8). The bijection given by the right logarithmic derivative and the product integral allows to identify $H_0^1(\I,K)$, as a metric group, with the Hilbert space $L^2(\I,{\mathfrak k})$ equipped with the group law
\begin{equation}
f\ast g =f+\Ad_{\Pi\exp f}g.
\label{eq:grouplaw}
\end{equation}
Here $\Pi\exp f$ is the short for the function $t\mapsto \prod_{0}^t \exp f(s)\,ds$, and the adjoint representation $\Ad$ is applied pointwise. 

In particular, zero is the neutral element of this group law, while the inverse to a function $f$ is given by
\begin{equation}
f^{\ast-1}=-\Ad_{\left(\prod\exp f\right)^{-1}}f.
\label{eq:inverse}
\end{equation}
(To check the claim, it is enough to work out just the product $f\ast f^{\ast-1}$.)

Moving on to the larger group $H^1(\I,K)$, the natural topology on it can be now defined, for example, by the left-invariant metric
\[d(f,g)=\norm{\partial^{log}f-\partial^{log}g}_2 + \rho(f(0),g(0)),\]
where $\rho$ is any left-invariant metric on $K$. This turns $H^1(\I,K)$ into a Polish group, containing $H_0^1(\I,K)$ as a co-compact normal subgroup.

The group $H_0^1(\s^1,K)$ of based finite energy loops is a Polish group isomorphic to a normal cocompact subgroup of $H_0^1(\I,G)$ given by the condition $f(1)=e$. Free finite energy loops form a Polish group, $H^1(\s^1,K)$, which is in its turn isomorphic to a closed (non-normal!) co-compact subgroup of $H^1(\I,K)$, given by the condition $f(0)=f(1)$. The group $H^1(\s^1,K)$ contains $H_0^1(\s^1,K)$ as a normal co-compact subgroup.

Let us also remark that if $K$ is non-abelian, then on all four groups of paths/loops above the left and right uniform structures are different (\cite{P20}, Prop. 7), which implies \cite{protasov} (given that our groups are metrisable, as the general case remains unsettled \cite{itzkowitz}) that the spaces of right and left uniformly continuous functions on them are different as well.

\section{The construction of asymptotically invariant measures}
\label{s:asym}

In this section we will mostly work with an isomorphic copy of the topological group $H^1_0(\I,K)$ which is the Hilbert space $L^2(\I,{\mathfrak k})$ equipped with the group operation $\ast$ as in Eq. (\ref{eq:grouplaw}).

For every $N$, denote $P_N$ the uniform partition of the unit interval $\I$ into subintervals of length $1/N$. Let $V_N$ be the vector space of all functions in $L^2(\I,{\mathfrak k})$ constant a.e. on elements of $P_N$. The union
\[V_{\infty}=\bigcup_{N=1}^{\infty} V_N\]
forms a vector space of simple functions which is dense in $L^2((0,1),{\mathfrak k})$.

Denote $B_N$ the unit ball in the $Nd$-dimensional Euclidean space $V_N$, where $d=\dim{\mathfrak k}$. Given $R>0$, let $\nu_{N,R}$ denote the restriction of the Lebesgue measure on the ball $RB_N$ of radius $R$ normalized so as to become a probability measure. 

In the next result, the Monge-Kantorovich (mass transportation) distance between probability measures is formed with regard to the metric on $L^2(\I,{\mathfrak k})$ that is the minimum of the norm distance and $1$. Clearly, this metric is also left-invariant under the multiplication law $\ast$ and generates the same  topology. For a comprehensive treatment of the mass transportation distance, we refer to \cite{villani}.

\begin{lemma}
Let $R_N=\omega(\sqrt N)\cap o(N)$.
Let $g\in V_{\infty}$ be a simple function. Then, as $N\to\infty$, the Monge-Kantorovich distance between the uniform measure $\nu_{N,R_N}$ on the ball $R_NB_N$ and its image $\nu_{N,R_N}\ast g$ under the right multiplication by $g$ converges to zero. Equivalently, for every $1$-Lipschitz function $F\colon L^2(\I,{\mathfrak k})\to \R$, satisfying $\norm{F}_{\infty}\leq 1$,
\[\int F(f)\,d(\nu_{N,R_N}) - \int F(f)\,d(\nu_{N,R_N}\ast g)\to 0
\mbox{ as }N\to\infty,\]
and the convergence is uniform over such $F$.
\label{l:main}
\end{lemma}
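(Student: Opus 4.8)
The plan is to work with the functional form of the statement already given in the lemma, estimating $\int F\,d\nu_{N,R_N}-\int F\,d(\nu_{N,R_N}\ast g)$ uniformly over $1$-Lipschitz $F$ with $\norm{F}_\infty\le 1$. Fix $M$ with $g\in V_M$; to lighten notation assume $N$ divisible by $M$, so that $g\in V_N$ (the general case is identical). The source of the difficulty is already visible in the translation formula $f\ast g=f+\Ad_{\Pi\exp f}g$: since $\Ad$ acts pointwise by orthogonal transformations, every point of $L^2(\I,{\mathfrak k})$ is displaced by a vector of the \emph{same} norm $\norm{g}_2$. Asymptotic invariance therefore cannot come from the displacement being small; it must come from a F{\o}lner-type phenomenon, namely that the normalised Lebesgue measure on a Euclidean ball of radius large compared with the square root of the dimension is almost unchanged by a bounded, almost volume-preserving perturbation. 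The two constraints on $R_N$ play opposite roles: the upper bound $R_N=o(N)$ controls the nonlinearity, the lower bound $R_N=\omega(\sqrt N)$ drives the F{\o}lner estimate.

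First I would descend to finite dimensions. Let $Q_N\colon L^2(\I,{\mathfrak k})\to V_N$ be the orthogonal projection, i.e. the conditional expectation averaging over the intervals of $P_N$, and put $T_N=Q_N\circ(\,\cdot\ast g\,)$, a self-map of $V_N\cong\R^{Nd}$. Writing $f=c_i$ on the $i$-th interval, the function $t\mapsto\Ad_{\Pi\exp f(t)}g(t)$ oscillates across that interval only because of the local rotation produced by the product integral; since $\Ad$ is orthogonal this oscillation is controlled by $\min(\abs{c_i}/N,1)$, and a direct $L^2$ estimate then yields $\sup_{f\in R_NB_N}\norm{(I-Q_N)\Ad_{\Pi\exp f}g}_2\lesssim \norm{g}_\infty R_N/N\to 0$, which is exactly where $R_N=o(N)$ is used. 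Coupling each point $f\ast g$ with $T_N(f)$ shows that $\nu_{N,R_N}\ast g$ and $(T_N)_{\#}\nu_{N,R_N}$ are within $o(1)$ in the (truncated) Monge--Kantorovich distance, reducing everything to the finite-dimensional claim $W_1\bigl(\nu_{N,R_N},(T_N)_{\#}\nu_{N,R_N}\bigr)\to 0$ in $V_N$.

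In $V_N$ I would use the change of variables $y=T_N(f)$, for which the Jacobian is the decisive quantity. Differentiating the product integral gives, with $A=\Pi\exp f$, the identity $D_f(\Ad_{\Pi\exp\cdot}g)(\xi)(t)=\Ad_{A(t)}\bigl[\int_0^t\Ad_{A(s)^{-1}}\xi(s)\,ds,\ g(t)\bigr]$. This is a \emph{causal} (Volterra-type) operator, so in the time-ordered block basis of $V_N$ the matrix $DT_N=I+E_N$ is block lower triangular, whence $\det DT_N=\prod_k\det(I+D_k)$ depends only on the diagonal $d\times d$ blocks $D_k$. Orthogonality of $\Ad$ makes $\norm{D_k}=O(\norm{g}_\infty/N)$ uniformly over the whole ball, while the leading part of $D_k$ has the form of an $\mathrm{ad}$-operator and is therefore trace-free, the Lie algebra of a compact group being unimodular. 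Consequently $\log\det DT_N=\sum_k\log\det(I+D_k)=O(1/N)\to 0$ uniformly on $R_NB_N$; the same causal structure shows $I+E_N$ to be everywhere invertible and $T_N$ injective, so $(T_N)_{\#}\nu_{N,R_N}$ is supported on $T_N(R_NB_N)$ with density $\abs{\det DT_N}^{-1}\to 1$.

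What remains is to compare the regions $R_NB_N$ and $T_N(R_NB_N)$, and this F{\o}lner-type symmetric-difference estimate is the step I expect to be the main obstacle. Because $\det DT_N\to 1$ the two bodies have asymptotically equal volume, but the crude inclusion of their difference in an annulus of width $\norm{g}_2$ only gives relative volume of order $Nd\,\norm{g}_2/R_N$, which does \emph{not} vanish under $R_N=\omega(\sqrt N)$. To reach the favourable rate $\sqrt{Nd}\,\norm{g}_2/R_N\to 0$ one must show that the displacement is, on the sphere of radius $R_N$ and on average, nearly tangential, i.e. that its radial component $\langle f,\Ad_{\Pi\exp f}g\rangle/\norm{f}$ is typically of the generic order $\norm{g}_2/\sqrt{Nd}$ rather than of order $\norm{g}_2$. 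Establishing this decorrelation between $f$ and the rotated vector $\Ad_{\Pi\exp f}g$ --- presumably by a concentration-of-measure argument for the Lipschitz function $f\mapsto\langle f,\Ad_{\Pi\exp f}g\rangle$ on $R_NB_N$ together with a bound on its mean --- is the analytic heart of the lemma, and it is precisely here that the lower constraint $R_N=\omega(\sqrt N)$ is consumed.
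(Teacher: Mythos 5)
Your first two steps run parallel to the paper's own proof: the projection estimate $\sup_{f\in R_NB_N}\norm{(I-Q_N)\Ad_{\Pi\exp f}g}_2\lesssim\norm{g}_\infty R_N/N$ is the analogue of Lemma~\ref{l:I} (the paper approximates the conjugator by the step function $\rho^f_N$ of Eq.~(\ref{eq:rho}) instead of projecting), and your causal-Jacobian computation is an approximate version of Lemmas~\ref{l:id+const} and \ref{l:f-1jacobian} (the paper's choice of $\rho^f_N$, whose value on the $j$-th interval does not involve $f_j$, makes the Jacobian exactly one, so no trace-free argument is needed; incidentally, your bound $O(1/N)$ for $\log\det DT_N$ should read $O(R_N/N)$, via $\sum_j\norm{f_j}_2\leq NR_N$, though it still vanishes). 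The genuine gap is the step you explicitly leave open, and it is not a routine one: the claim that $\abs{\langle f,\Ad_{\Pi\exp f}g\rangle}\leq\norm{f}_2\,\e_N$ with high probability, for some $\e_N\in o(R_N/N)\cap\omega(N^{-1/2})$, cannot be obtained by the method you indicate. By your own variation formula, the derivative of $\Phi(f)=\langle f,\Ad_{\Pi\exp f}g\rangle$ in the direction $\xi$ is $\langle\xi,\Ad_Ag\rangle+\langle f,\Ad_A[\int_0^{\cdot}\Ad_{A(s)^{-1}}\xi(s)\,ds,\,g]\rangle$, and the second term is in general of order $\norm{f}_2\norm{g}_\infty\norm{\xi}_2$; hence the Lipschitz constant of $\Phi$ on $R_NB_N$ is of order $R_N\norm{g}_\infty$, not $\norm{g}_2$. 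L\'evy's inequality then localizes $\Phi$ only at scale $R_N\norm{g}_\infty\cdot R_N/\sqrt{N}$, which exceeds the required scale $R_N\e_N=o(R_N^2/N)$ by a factor of at least $\sqrt N$. Worse, even perfect concentration would only localize $\Phi$ near its mean, and you have no bound on that mean: since $\Ad_{\Pi\exp f}g$ is a nonlinear function of $f$, a priori the mean could be of order $R_N\norm{g}_2$, and no evident symmetry kills it (note that $\Pi\exp(-f)\neq(\Pi\exp f)^{-1}$, so $f\mapsto-f$ does not help).

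The paper closes exactly this gap by algebra rather than analysis. For step functions, $f(t_j)$ commutes with $\exp(\pm(t-t_j)f(t_j))$, giving the identity $\Ad_{(\Pi\exp f)^{-1}}f=\Ad_{(\rho^f_N)^{-1}}f$ (Lemma~\ref{l:adpexp=adrho}); consequently $\langle f,\Ad_{\rho^f_N}g\rangle=\langle\Ad_{(\rho^f_N)^{-1}}f,g\rangle=\langle\Ad_{(\Pi\exp f)^{-1}}f,g\rangle=\langle -f^{\ast-1},g\rangle$. Since the group inverse preserves the measures $\nu_{N,R_N}$ (Lemma~\ref{l:f-1jacobian}), the nonlinear functional $f\mapsto\langle f,\Ad_{\rho^f_N}g\rangle$ is \emph{equidistributed} under $\nu_{N,R_N}$ with the linear functional $f\mapsto\langle f,g\rangle$, which has mean zero and concentrates at scale $R_N/\sqrt N$; this is Lemma~\ref{l:angleconc}, and it is precisely what feeds the volume comparison of Lemma~\ref{l:II}. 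This identity is also the reason the paper discretizes the conjugator as $\rho^f_N$ instead of orthogonally projecting $f\ast g$ as you do: your map $T_N=Q_N\circ(\,\cdot\ast g\,)$ satisfies no such identity, so the decorrelation would have to be derived afresh for it. As it stands, your proposal is a correct reduction wrapped around an unproven core, and that core is the main idea of the lemma.
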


The rest of this Section is devoted to proving Lemma \ref{l:main}.

Consider any function $f\in L^2(\I,{\mathfrak k})$. Denote for simplicity 
\begin{equation}
r(t)=\Pi_0^t\exp f(s)\,ds.
\label{eq:r}
\end{equation}
This is an element of $H_0^1(\I,K)$. Given a uniform partition $0<t_1<\ldots<t_N=1$ of the unit interval, denote $\rho=\rho^f_N$ a step function from $\I$ to $K$ assuming the value $r(i/N)$ on the $i$-th interval of the partition, $[i/N,(i+1)/N)$. In other words,
\begin{equation}
\rho^f_N(t)=\Pi_0^{i/N}\exp f(s)\,ds,\mbox{ if }\frac iN\leq t<\frac{i+1}N.
\label{eq:rho}
\end{equation}

Now write
\begin{equation}
f\ast g = \underbrace{f + \Ad_{\rho^f_N}g}_{(I)} +\underbrace{\left(\Ad_{\Pi\exp f}g-\Ad_{\rho^f_N}g\right)}_{(II)}.
\label{eq:split}
\end{equation}
We will separately estimate the two expressions on the right. First, let us get out of the way the simpler estimate for $(II)$.

\begin{lemma}
The exponential map from $\mathfrak{so}(d)$ to $SO(d)$ is $1$-Lipschitz with regard to the uniform norm on $M_d(\R)$.
\label{l:explip}
\end{lemma}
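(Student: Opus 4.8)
The plan is to reduce the difference $\exp X-\exp Y$ of two exponentials to an integral of a quantity whose norm is easy to control, exploiting that the exponential of a skew-symmetric matrix is orthogonal, hence an isometry of operator norm one. For $X,Y\in\mathfrak{so}(d)$ I would interpolate along the product path $t\mapsto \exp(tX)\exp((1-t)Y)$, $t\in[0,1]$, which joins $\exp Y$ at $t=0$ to $\exp X$ at $t=1$, and apply the fundamental theorem of calculus:
\[
\exp X-\exp Y=\int_0^1 \frac{d}{dt}\left[\exp(tX)\exp((1-t)Y)\right]dt.
\]

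The key point is that $X$ commutes with $\exp(tX)$, so differentiating the product collapses to a clean expression with the constant factor $X-Y$ sandwiched between two orthogonal matrices:
\[
\frac{d}{dt}\left[\exp(tX)\exp((1-t)Y)\right]=\exp(tX)\,(X-Y)\,\exp((1-t)Y).
\]
Since $\exp(tX)$ and $\exp((1-t)Y)$ lie in $SO(d)$ and hence have operator norm one, submultiplicativity of the operator norm gives $\norm{\exp(tX)(X-Y)\exp((1-t)Y)}\le\norm{X-Y}$ for every $t$. Moving the norm inside the integral via the triangle inequality then yields $\norm{\exp X-\exp Y}\le\int_0^1\norm{X-Y}\,dt=\norm{X-Y}$, which is exactly the claimed $1$-Lipschitz bound.

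The only genuine subtlety is the choice of interpolation path. The naive line segment $t\mapsto \exp(tX+(1-t)Y)$ would force one to differentiate the exponential of a varying non-commuting argument, producing a complicated Duhamel integral and destroying the clean isometric structure; replacing the segment by the product path sidesteps non-commutativity entirely, since $X$ always commutes with $\exp(tX)$. I would also confirm at the outset that the "uniform norm" here is the operator norm induced by the Euclidean structure, under which the elements of $SO(d)$ are isometries, as the whole argument rests on precisely this property.
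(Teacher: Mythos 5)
Your proof is correct and complete. Note that the paper itself offers no argument for this lemma: it simply points to exercise 106 of the supplementary exercise list to Serre's book, so there is no internal proof to compare against. Your argument is the standard Duhamel-type one: the computation $\frac{d}{dt}\bigl[\exp(tX)\exp((1-t)Y)\bigr]=\exp(tX)\,(X-Y)\,\exp((1-t)Y)$ is right (the commutation of $X$ with $\exp(tX)$ is exactly what lets the two terms of the product rule combine), the factors $\exp(tX)$ and $\exp((1-t)Y)$ are orthogonal because $X,Y$ are skew-symmetric, and the bound $\norm{\exp X-\exp Y}\leq\int_0^1\norm{X-Y}\,dt$ follows by the triangle inequality for integrals. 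A common alternative, and plausibly what the cited exercise sketches, is the discrete telescoping version of the same idea: writing $e^X-e^Y=\sum_{k=0}^{n-1}e^{kX/n}\bigl(e^{X/n}-e^{Y/n}\bigr)e^{(n-1-k)Y/n}$, using orthogonality of the outer factors to get $\norm{e^X-e^Y}\leq n\norm{e^{X/n}-e^{Y/n}}\leq\norm{X-Y}+O(1/n)$, and letting $n\to\infty$; your integral formula is precisely the continuum limit of this, and both rest on the two facts you isolate, namely commutation of $X$ with its own exponential and the isometric nature of orthogonal matrices in the operator norm. Your closing remark about why the straight-line interpolation $\exp(tX+(1-t)Y)$ would be awkward is also well taken.
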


For a sketch of an elegant and simple proof, see exercise 106 on the list of supplementary exercises to the book \cite{dserre}, pp. 75--76. (Note that the notation $\norm{\cdot}_2$ in item (b) of the exercise refers to the uniform operator norm, see subsection 4.1.2 in the main text of the book {\em ibid.})

\begin{remark}
The sequence of probability measures, whose asymptotic invariance with regard to the right multiplication we are proving, is the same as used by this author in \cite{P20} to prove the asymptotic invariance with regard to the left multiplication. The following lemma is more or less where the two arguments overlap. The estimates in the proof are mostly extracted from the proof of Lemma 10 in \cite{P20}. However, as the result was never isolated as such, and the present bounds are more exact, we have decided to both state it and equip with a complete proof.
\end{remark}

\begin{lemma}
Let $g\colon \I\to{\mathfrak k}$ be a simple function, taking the value $g_i$ on the $i$-th interval of the partition $P_N$. Denote $\norm{g}_{\infty}=\max_{i=1}^N\norm{g_i}_2$. Let $\rho_N^f$ be defined for every function $f$ as in Eq. (\ref{eq:rho}). 
Then,
\[\sup_{f\in R_NB_N}\left\Vert\Ad_{\Pi\exp f}g-\Ad_{\rho^f_N}g\right\Vert_2\leq 2\norm{g}_{\infty}\frac{R_N}{N}.\]
\label{l:I}
\end{lemma}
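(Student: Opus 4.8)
\emph{Proof plan.}
The plan is to reduce the quantity to a pointwise estimate on each interval of $P_N$ and then apply Lemma~\ref{l:explip}. Since $\Pi\exp f$ denotes the path $r(t)=\Pi_0^t\exp f(s)\,ds$ and $\rho^f_N$ is by definition the step path equal to the constant $r(i/N)$ on $[i/N,(i+1)/N)$, while $g$ takes the constant value $g_i$ there, the two functions whose $L^2$-distance we measure are, on that interval, $\Ad_{r(t)}g_i$ and $\Ad_{r(i/N)}g_i$. So it suffices to bound $\big(\Ad_{r(t)}-\Ad_{r(i/N)}\big)g_i$ pointwise and integrate.

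I would work with the adjoint action as the orthogonal representation $\Ad\colon K\to SO(d)$, $d=\dim{\mathfrak k}$, obtained by fixing an orthonormal basis of $\mathfrak k$ for the $\Ad$-invariant inner product (taken to be the Hilbert--Schmidt one, consistent with realizing $K$ in the orthogonal group); then $\Ad_a$ is an orthogonal $d\times d$ matrix and $\norm{\cdot}_2$ is the Euclidean norm on ${\mathfrak k}\cong\R^d$. Because each $f\in R_NB_N\subset V_N$ is constant, equal to some $f_i\in{\mathfrak k}$, on the $i$-th interval, the product integral there collapses to a single exponential, $r(t)=\exp\!\big((t-i/N)f_i\big)\,r(i/N)$, whence
\[
\Ad_{r(t)}-\Ad_{r(i/N)}=\Big(\exp\!\big((t-i/N)\,\mathrm{ad}_{f_i}\big)-\Id\Big)\Ad_{r(i/N)} ,
\]
using $\Ad_{\exp X}=\exp(\mathrm{ad}_X)$ and that $\mathrm{ad}_{f_i}\in\mathfrak{so}(d)$, the latter being exactly the infinitesimal form of $\Ad$-invariance of the inner product.

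Now the two ingredients combine. Since $\Ad_{r(i/N)}\in SO(d)$ leaves the operator norm invariant and $\norm{g_i}_2\le\norm{g}_\infty$, Lemma~\ref{l:explip} applied to $\exp\colon\mathfrak{so}(d)\to SO(d)$ (comparing with $\exp 0=\Id$) gives, for $t\in[i/N,(i+1)/N)$,
\[
\norm{\big(\Ad_{r(t)}-\Ad_{r(i/N)}\big)g_i}_2\le (t-i/N)\,\norm{\mathrm{ad}_{f_i}}_{op}\,\norm{g_i}_2\le\frac1N\,\norm{\mathrm{ad}_{f_i}}_{op}\,\norm{g}_\infty .
\]
The factor $2$ enters through the elementary bound $\norm{\mathrm{ad}_{f_i}}_{op}\le 2\norm{f_i}_2$, coming from $\mathrm{ad}_{f_i}Y=[f_i,Y]$ and submultiplicativity of the matrix norms. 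Squaring, integrating over the $i$-th interval (of length $1/N$), and summing,
\[
\norm{\Ad_{\Pi\exp f}g-\Ad_{\rho^f_N}g}_2^2\le\frac{4}{N^{3}}\,\norm{g}_\infty^2\sum_{i=1}^N\norm{f_i}_2^2
=\frac{4}{N^{2}}\,\norm{g}_\infty^2\,\norm{f}_2^2\le\frac{4}{N^{2}}\,\norm{g}_\infty^2 R_N^2 ,
\]
using $\sum_i\norm{f_i}_2^2=N\norm{f}_2^2$ and $\norm{f}_2\le R_N$ for $f\in R_NB_N$. Taking square roots yields the claim, uniformly over $f\in R_NB_N$.

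The computation is routine once organized this way, and the only real decision is how to feed Lemma~\ref{l:explip} in. The crux, and what forces the bound to be as small as $O(R_N/N)$, is the observation in the second paragraph: because the measures live on the step space $V_N$ tied to the very partition defining $\rho^f_N$, each local product integral is a single exponential, so $\Ad_{r(t)}$ differs from $\Ad_{r(i/N)}$ only through $\exp\!\big((t-i/N)\,\mathrm{ad}_{f_i}\big)$, which Lemma~\ref{l:explip} controls linearly in $(t-i/N)\le 1/N$. The one point needing care is the bookkeeping of the two norms (the Euclidean norm $\norm{\cdot}_2$ on $\mathfrak k$ versus the operator norm $\norm{\cdot}_{op}$ on $M_d(\R)$), so that with the Hilbert--Schmidt normalization $\norm{\mathrm{ad}_{f_i}}_{op}$ is correctly bounded by $2\norm{f_i}_2$ and the stated constant comes out exactly.
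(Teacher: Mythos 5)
Your proof is correct, gives the stated constant exactly, and is essentially the paper's argument: both proofs exploit that $f\in V_N$ is constant on each interval of $P_N$, so that the product integral collapses to $r(t)=\exp\bigl((t-i/N)f_i\bigr)r(i/N)$ there, then feed in Lemma \ref{l:explip} and finish by squaring, integrating over each interval, and summing, using $\frac1N\sum_i\norm{f_i}_2^2=\norm{f}_2^2\le R_N^2$. The only (cosmetic) difference is where the factor $2$ enters: the paper applies the Lipschitz lemma to $\exp_K$ in the defining matrix realization of $K$, getting $\norm{r(t)-\rho(t)}_u\le (t-i/N)\norm{f_i}_2$, and then uses the triangle-inequality bound $\norm{uAu^{-1}-vAv^{-1}}_2\le 2\norm{u-v}_u\norm{A}_2$, whereas you apply it in the adjoint representation via $\Ad_{\exp X}=\exp(\mathrm{ad}_X)$ and get the $2$ from $\norm{\mathrm{ad}_{f_i}}_{op}\le 2\norm{f_i}_2$, which is the infinitesimal form of the same conjugation estimate.
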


\begin{proof}
Let $f\in R_NB_N$. Denote $f_i\in{\mathfrak k}$ the contant value assumed by $f$ on the $i$-th interval of the uniform partition $P_N$. Thus, 
\[\frac 1{N}\sum_{i=0}^{N-1}\norm{f_i}_2^2 = \norm{f}_2^2\leq R^2_N.\] 
Here and in the sequel, we will also denote by $\norm{\cdot}_2$ the Hilbert--Schmidt norm, and $\norm{\cdot}_u$ the uniform norm on the matrices.
Remembering that the operator norm is bounded by the Hilbert--Schmidt norm and using Lemma \ref{l:explip}, given $t\in [i/N,(i+1)/N)$, we have
\begin{align*}
\norm{\rho(t)-r(t)}_u &=
\norm{r(i/N)-r(t)}_u \\ 
& = \norm{1-\exp{(t-i/N)f_i}}_u \\
&\leq (t-i/N)\norm{f_i}_2 \\
&\leq N^{-1}\norm{f_i}_2.
\end{align*}

Recall the formula $\norm{AB}_2\leq\norm{A}_2\norm{B}_u$.
For any two unitary matrices $u,v$ and a matrix $A\in M_n$, 
\begin{align*}
\norm{\Ad_uA-\Ad_vA}_2 &= \norm{uAu^{-1}-vAv^{-1}}_2 \\
& \leq \norm{uAu^{-1}-uAv^{-1}}_2 + \norm{uAv^{-1}-vAv^{-1}}_2 \\
&\leq 2\norm{u-v}_u\norm{A}_2.
\end{align*}
Now we have
\begin{align*}
\norm{ \Ad_rg-\Ad_{\rho} g}_2^2 &= \int_0^1 \norm{(\Ad_{r(x)}-\Ad_{\rho(x)})g(x)}_2^2d x\\
& \leq \sum_{i=0}^{N-1} \int_{i/N}^{(i+1)/N} 4\norm{r(x)-\rho(x)}_u^2\norm{g_i}_2^2d x\\
&\leq 4\sum_{i=0}^{N-1} \int_{i/N}^{(i+1)/N}N^{-2}\norm{f_i}_2^2  \max_{i=0}^{N-1}\norm{g_i}_2^2dx\\
&= 4N^{-2}\norm{g}^2_{\infty}\int_{0}^{1} \norm{f(x)}_2^2dx\\
&= 4N^{-2}\norm{g}^2_{\infty}\norm{f}^2_2.
\end{align*}
Thus,
\begin{align*}
\norm{\Ad_rg-\Ad_{\rho} g}_2&\leq 2N^{-1}\norm{g}_{\infty}\norm{f}_2 \\
&\leq 2\norm{g}_{\infty}\frac{R_N}{N}.
\label{eq:thus}
\end{align*}
\end{proof}

Now we concentrate on estimating $(I)$. 

\begin{lemma}
If $f$ is a step function taking constant values in $\mathfrak k$ on each interval $[t_i,t_{i+1})$, $i=0,1,\ldots,N-1$, then
\begin{equation}
\Ad_{(\Pi\exp f)^{-1}}f = \Ad_{{(\rho_N^f)}^{-1}}f.
\label{eq:adpexp=adrho}
\end{equation}
\label{l:adpexp=adrho}
\end{lemma}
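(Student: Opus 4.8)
The plan is to verify the identity pointwise on each interval of the partition, where everything collapses to the elementary observation that a Lie algebra element is fixed by conjugation by its own one-parameter subgroup.

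First I would fix an index $i$ and a point $t\in[t_i,t_{i+1})$, and identify the two objects at that point. Writing $r=\Pi\exp f$ as in Eq. (\ref{eq:r}), the definition (\ref{eq:rho}) gives $\rho_N^f(t)=r(t_i)$, constant on the interval, while $f(t)=f_i$. The one computation that does the work is to evaluate $r(t)$ on $[t_i,t_{i+1})$ directly from the product integral formula (\ref{eq:prod}): since $f\equiv f_i$ there, all the factors except the leading one assemble into $r(t_i)$, so that
\[
r(t)=\exp\big((t-t_i)f_i\big)\cdot r(t_i)=\exp\big((t-t_i)f_i\big)\,\rho_N^f(t).
\]

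Taking inverses gives $r(t)^{-1}=\rho_N^f(t)^{-1}\exp\big(-(t-t_i)f_i\big)$, whence
\[
\Ad_{r(t)^{-1}}f_i=\rho_N^f(t)^{-1}\,\exp\big(-(t-t_i)f_i\big)\,f_i\,\exp\big((t-t_i)f_i\big)\,\rho_N^f(t).
\]
Now the crux: $f_i$ commutes with $\exp(sf_i)$ for every scalar $s$, the latter being a power series in $f_i$, so that $\exp(-sf_i)\,f_i\,\exp(sf_i)=f_i$ and the two exponential factors cancel. What is left is precisely $\rho_N^f(t)^{-1}f_i\,\rho_N^f(t)=\Ad_{(\rho_N^f)^{-1}}f$ evaluated at $t$. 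As $t$ and $i$ were arbitrary, the equality holds a.e.\ on $\I$, which is exactly the assertion of the lemma.

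I do not expect a genuine obstacle here. The only place demanding care is the ordering of the factors in the product integral (\ref{eq:prod}): the argument hinges on the single exponential $\exp\big((t-t_i)f_i\big)$ standing immediately adjacent to $f_i$ inside the conjugation, so that the commutation relation can absorb it cleanly. Were the convention reversed, one would instead cancel the factors on the opposite side, but the conclusion would be identical. It is worth emphasising that this is the one interval on which $r$ and $\rho_N^f$ genuinely differ — they differ by the factor $\exp\big((t-t_i)f_i\big)$ — and yet, because that factor is generated by $f_i$ itself, the difference is invisible to the adjoint action on $f$.
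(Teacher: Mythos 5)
Your proof is correct and is essentially the paper's own argument: both evaluate the product integral of the step function pointwise so that the only discrepancy between $(\Pi\exp f)(t)$ and $\rho_N^f(t)$ on $[t_i,t_{i+1})$ is the single factor $\exp\big((t-t_i)f_i\big)$, which is then absorbed by the commutation of $f_i$ with $\exp\big(\pm(t-t_i)f_i\big)$ inside the conjugation. The only cosmetic difference is that you package the trailing factors as $r(t_i)=\rho_N^f(t)$, whereas the paper writes out the full product of exponentials explicitly before cancelling.
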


\begin{proof}
According to the formula (\ref{eq:prod}) for the product integral of a step function, if $t\in [t_j,t_{j+1})$,
\begin{align}
\Ad_{(\Pi\exp f(t))^{-1}}f(t)&= \exp_K (-t_1f(0))\ldots \exp_K(-(t_j-t_{j-1})f(t_{j-1}))\exp_K (-(t-t_j)f(t_j))
f(t_j)\nonumber \\ &
\exp_K (t-t_j)f(t_j)\exp_K(t_j-t_{j-1})f(t_{j-1})\ldots\exp_K t_1f(0)\nonumber \\
&= \exp_K (-t_1f(0))\ldots \exp_K(-(t_j-t_{j-1})f(t_{j-1}))f(t_j)\label{eq:expansion}  \\ &\exp_K(t_j-t_{j-1})f(t_{j-1})\ldots\exp_K t_1f(0)\nonumber\\
&= \Ad_{{(\rho_N^f(t))}^{-1}}f(t).\nonumber 
\end{align}
Here we used the facts that on the $j$-th interval $f(t)\equiv f(t_j)$, $\rho_N^f(t)\equiv \rho_N^f(t_j)$, and the matrices $f(t_j)$ and $\exp(\pm (t-t_j) f(t_j))$ commute. 
\end{proof}

\begin{lemma}
The inverse map
\[f\mapsto f^{\ast-1}=-\Ad_{(\Pi\exp f)^{-1}}f\]
preserves the $dN$-dimensional Lebesgue measure on the Euclidean space $V_N$ and in particular preserves the measures $\nu_{N,R_N}$.
\label{l:f-1jacobian}
\end{lemma}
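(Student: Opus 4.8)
The plan is to exploit Lemma~\ref{l:adpexp=adrho}, which on the subspace $V_N$ of step functions rewrites the inverse map in the much more tractable form
\[f\mapsto f^{\ast-1}=-\Ad_{(\rho_N^f)^{-1}}f.\]
I would identify $V_N$ with $\mathfrak{k}^N\cong\R^{dN}$ via the coordinates $(f_0,\ldots,f_{N-1})$, where $f_i$ is the constant value of $f$ on the $i$-th interval of $P_N$, and write the image as $(h_0,\ldots,h_{N-1})$ with $h_j=-\Ad_{\rho_j^{-1}}f_j$, where $\rho_j=r(j/N)$ is the constant value of $\rho_N^f$ on the $j$-th interval. The decisive observation is that, by Eq.~(\ref{eq:rho}) together with (\ref{eq:prod}), $\rho_j$ is the product of the exponentials $\exp((1/N)f_{j-1}),\ldots,\exp((1/N)f_0)$ and therefore depends only on $f_0,\ldots,f_{j-1}$, and not on $f_j$ itself.

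Consequently, the Jacobian of the map, viewed as an $N\times N$ array of $d\times d$ blocks $\partial h_j/\partial f_i$, is block lower-triangular: the block vanishes for $i>j$, while the diagonal block is exactly $\partial h_j/\partial f_j=-\Ad_{\rho_j^{-1}}$. I would then compute the determinant as the product of the determinants of the diagonal blocks. Since the inner product on $\mathfrak k$ is $\Ad$-invariant and $K$ is connected, each $\Ad_{\rho_j^{-1}}$ lies in $SO(\mathfrak k)$ and hence has determinant $1$; multiplying by $-1$ on a $d$-dimensional space contributes a factor $(-1)^d$, so the full Jacobian determinant equals $(-1)^{dN}$. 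Its absolute value being $1$, the map preserves the $dN$-dimensional Lebesgue measure on $V_N$.

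To pass from Lebesgue measure to $\nu_{N,R_N}$, I would note that the inverse map is isometric for the $L^2$-norm: since $\Ad$ is applied pointwise and preserves the norm on $\mathfrak k$, one has $\norm{f^{\ast-1}}_2=\norm{\Ad_{(\Pi\exp f)^{-1}}f}_2=\norm{f}_2$. Hence the map carries the ball $R_NB_N$ bijectively onto itself, and combined with the Lebesgue-invariance just established this forces it to preserve the normalized restriction $\nu_{N,R_N}$.

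The heart of the matter — and the only place where something could go wrong — is the block lower-triangular structure, which hinges entirely on $\rho_j$ being independent of $f_j$. This is not visible from the raw formula $f^{\ast-1}=-\Ad_{(\Pi\exp f)^{-1}}f$, since $\Pi\exp f$ at an interior point $t$ of the $j$-th interval does involve $f_j$ through a factor $\exp((t-t_j)f_j)$; it is precisely the commutation of $f_j$ with $\exp(\pm(t-t_j)f_j)$, recorded in Lemma~\ref{l:adpexp=adrho}, that removes this dependence and makes the triangular computation legitimate. Everything else reduces to the determinant of a block-triangular matrix.
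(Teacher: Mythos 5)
Your proposal is correct and follows essentially the same route as the paper's own proof: reduce the inverse map on $V_N$ to $f\mapsto -\Ad_{(\rho_N^f)^{-1}}f$ via Lemma~\ref{l:adpexp=adrho}, observe that in the coordinates $(f_0,\ldots,f_{N-1})$ the Jacobian is block-triangular with orthogonal diagonal blocks $-\Ad_{\rho_j^{-1}}$ (so the determinant has absolute value one), and use norm preservation to see that each ball $R_NB_N$ is carried bijectively onto itself, whence $\nu_{N,R_N}$ is preserved. The only cosmetic caveat is your word ``isometric'': the map is norm-preserving but not an isometry (as the paper notes, it would otherwise be linear by Mazur--Ulam), though the property you actually use, $\norm{f^{\ast-1}}_2=\norm{f}_2$, is exactly right.
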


\begin{proof}
According to Lemma \ref{l:adpexp=adrho}, the linear subspace $V_N$ is closed under the inverse map, as indeed conjugation by the inverse of $\rho_N^f$ is an orthogonal transformation of $V_N$.
Since $\norm{f^{\ast-1}}_2=\norm{f}_2$, the inverse map is a non-linear bijection of each ball centred at zero onto itself. (Of course it is not an isometry, for otherwise it would be linear by Mazur--Ulam theorem.) Enough to verify the first claim. Let $f$ be a simple function, and denote again $f_i\in{\mathfrak k}$ the constant value of $f$ on the $i$-th interval. If in the formula (\ref{eq:expansion}) we 
treat the values $f_j=f(t_j)$ as the coordinate martix-valued variables in the space $V_N\cong {\mathfrak k}^N$, the map in the statement of our lemma can be written as a matrix function, say $\Phi$, whose $j$-th matrix component is
\begin{align*}
&\Phi_j(f_0,f_1,f_2,\ldots,f_{N-1})\\ 
& =-\exp_K (-t_1f_0)\ldots \exp_K(-(t_j-t_{j-1})f_{j-1})f_j\exp_K(t_j-t_{j-1})f_{j-1})\ldots\exp_K t_1f_0.
\end{align*}
The $j$-th component of the above function only depends on the coordinates up to and including the $j$-th.
The Jacobian matrix is therefore upper block-diagonal, whose diagonal is made of the (matrices representing the) differentials of the above functions, which are linear in $f_j$ when the values of $f_0,f_1,\ldots,f_{j-1}$ are fixed. The differential of a linear operator is the operator itself. The operator $x\mapsto -\Ad_ux$ is orthogonal and so has determinant $\pm 1$. Since $x\mapsto\Ad_ux$ is connected by a path with the identity operator, the determinant of the Jacobian is identically either plus or minus one.
\end{proof}

\begin{remark}
The above observation does not permit us to reduce the main theorem of this note to the argument used in \cite{P20}. The inverse map preserves the sequence of probability measures $\nu_{N,R_N}$ and replaces the right multiplication with the left one. So far, so good. However, it also replaces the left uniform structure (given by the familiar Hilbert norm) with the right uniform structure (for which we have no reasonable description). 
So overall, the proof of amenability turns out rather more involved than the proof of skew-amenability in \cite{P20}.
\end{remark}

\begin{lemma}
Let $g\in V_N$. The mapping 
\[V_N\ni f\mapsto f + \Ad_{\rho^f_N}g\in V_N\]
preserves the Lebesgue measure on $V_N$.
\label{l:id+const}
\end{lemma}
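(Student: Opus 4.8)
The plan is to exploit the ``causal'' (triangular) dependence of $\rho^f_N$ on the coordinates of $f$, exactly as in the proof of Lemma~\ref{l:f-1jacobian}, and to read off the Jacobian determinant directly. First I would identify $V_N$ with ${\mathfrak k}^N$ by recording the constant values $f_0,f_1,\ldots,f_{N-1}$ that $f$ assumes on the successive intervals of $P_N$, and likewise write $g_i$ for the values of $g$. Writing $\Psi$ for the map in the statement, its $i$-th matrix component is
\[\Psi_i(f_0,\ldots,f_{N-1}) = f_i + \Ad_{\rho_i}g_i,\]
where $\rho_i$ denotes the constant value of $\rho^f_N$ on the $i$-th interval, namely $r(i/N)=\Pi_0^{i/N}\exp f(s)\,ds$. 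Note first that $\Psi$ does land in $V_N$, since $\rho^f_N$ is constant on the partition intervals and $\Ad$ is applied pointwise, so $\Ad_{\rho^f_N}g$ is again a step function adapted to $P_N$.

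The key observation is that, by the product integral formula (\ref{eq:prod}) (compare (\ref{eq:rho})), $\rho_i$ is assembled only from $f_0,\ldots,f_{i-1}$ and does not involve $f_i,f_{i+1},\ldots,f_{N-1}$; in particular $\rho_0=e$. Consequently $\Psi_i$ depends only on $f_0,\ldots,f_i$, and its dependence on the ``top'' variable $f_i$ is through the single summand $f_i$, since the corrective term $\Ad_{\rho_i}g_i$ is a function of $f_0,\ldots,f_{i-1}$ alone. Differentiating, I would conclude that $\partial\Psi_i/\partial f_j=0$ whenever $j>i$, while $\partial\Psi_i/\partial f_i=\Id$. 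Hence the Jacobian of $\Psi$, in the block form indexed by the coordinates, is block-triangular (compare Lemma~\ref{l:f-1jacobian}) with every diagonal block equal to the identity, and therefore its determinant is identically $1$.

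To convert the unit Jacobian into the stated measure preservation, I would observe that the same triangular structure makes $\Psi$ a diffeomorphism of $V_N$: one solves $\Psi(f)=y$ uniquely and smoothly by recursion, setting $f_0=y_0-g_0$ and then $f_i=y_i-\Ad_{\rho_i}g_i$ once $f_0,\ldots,f_{i-1}$ (hence $\rho_i$) have been determined. The change of variables formula with Jacobian determinant $1$ then yields that $\Psi$ pushes the $dN$-dimensional Lebesgue measure on $V_N$ onto itself.

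I do not anticipate a serious obstacle here: the entire content is the triangular dependence of $\rho^f_N$, which renders the determinant computation immediate and is the same structural phenomenon already exploited in Lemma~\ref{l:f-1jacobian}, only cleaner, as the diagonal blocks are now exactly the identity rather than orthogonal transformations. The only points requiring care are the bookkeeping of the index ranges (so that the $0$-th interval correctly gives $\rho_0=e$) and the verification, noted above, that the image stays inside $V_N$.
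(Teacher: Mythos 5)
Your proposal is correct and follows essentially the same route as the paper's proof: identify $V_N$ with ${\mathfrak k}^N$, note that the conjugating element $\rho^f_N$ on the $i$-th interval depends only on the earlier coordinates $f_0,\ldots,f_{i-1}$, conclude that the Jacobian is block-triangular with identity diagonal blocks, hence has determinant $1$. Your explicit verification of bijectivity by the recursive inversion and of the image landing in $V_N$ are points the paper leaves implicit, but the core argument is identical.
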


\begin{proof}
The argument is similar to the proof of Lemma \ref{l:f-1jacobian}. If we treat the mapping above as a function from ${\mathfrak k}^N$ to itself that depends on $N$ matrix coordinates and has $N$ matrix components, then the $j$-th component is of the form
\begin{align*}
&\Phi_j(f_0,f_1,f_2,\ldots,f_{N-1})\\ 
& =f_j+\exp_K (-t_1f_0)\ldots \exp_K(-(t_j-t_{j-1})f_{j-1})g_j\exp_K(t_j-t_{j-1})f_{j-1}\ldots\exp_K t_1f_0,
\end{align*}
where $g_j$ are constant matrices. Thus, the Jacobian matrix of the map has an upper triangular block-diagonal form. If we fix the coordinates $f_0,f_1,\ldots,f_{j-1}$, the block $(j,j)$ on the diagonal is the identity matrix, because it represents the differential of the map of the form ``identity $+$ constant.'' Thus, the determinant of the Jacobian is one.
\end{proof}

\begin{lemma}
Let $g\neq 0$ be fixed, and let $(R_N)$ be a sequence of strictly positive reals. The angle between a random element $f\in R_NB_N$, $f\sim\nu_{N,R_N}$ and $\Ad_{\rho_N^f}g$ concentrates exponentially around $\pi/2$, that is, for some suitable constants $C,c>0$ and all sufficiently small $\e>0$,
\[\nu_{N,R_N}\left\{f\in R_NB_N\colon \left\vert\angle\left(f,\Ad_{\rho_N^f}g\right)
-\frac{\pi}2\right\vert>\e\right\}<Ce^{-c\e^2N}.\]
\label{l:angleconc}
\end{lemma}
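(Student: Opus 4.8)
The plan is to convert the angle into a cosine and then exploit the causal way in which $\Ad_{\rho^f_N}$ depends on $f$. Write $S=\langle f,\Ad_{\rho^f_N}g\rangle$ for the $L^2$ inner product. Since $\Ad$ acts pointwise by orthogonal transformations of ${\mathfrak k}$, we have $\norm{\Ad_{\rho^f_N}g}_2=\norm{g}_2$, so $\cos\angle(f,\Ad_{\rho^f_N}g)=S/(\norm{f}_2\norm{g}_2)$; because the angle lies in $[0,\pi]$, the event $\{\abs{\angle-\pi/2}>\e\}$ coincides with $\{\abs{\cos\angle}>\sin\e\}$, which in turn is contained in $\{\abs{S}>\tfrac12\sin\e\,R_N\norm{g}_2\}\cup\{\norm{f}_2<\tfrac12R_N\}$. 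The second set is negligible: the $\nu_{N,R_N}$-measure of the ball of radius $R_N/2$ inside $R_NB_N$ is exactly $2^{-Nd}$, which for small $\e$ is dominated by a term of the form $Ce^{-c\e^2N}$. Thus it suffices to prove a concentration estimate for $S$ alone.

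Expanding $S$ over the partition, on the $i$-th interval one has $f\equiv f_i$ and $\Ad_{\rho^f_N}g\equiv w_i:=\Ad_{r(i/N)}g_i$, whence $S=\tfrac1N\sum_{i=0}^{N-1}\langle f_i,w_i\rangle$ with $\norm{w_i}=\norm{g_i}$. The decisive structural observation is that $r(i/N)=\Pi_0^{i/N}\exp f(s)\,ds$ depends on $f$ only through its values $f_0,\dots,f_{i-1}$ on the earlier intervals, so each $w_i$ is a function of $f_0,\dots,f_{i-1}$ alone and does not involve $f_i$; in particular $w_i$ is ${\mathcal F}_i:=\sigma(f_0,\dots,f_{i-1})$-measurable. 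I would then condition on the radii $\norm{f_0},\dots,\norm{f_{N-1}}$: by the polar factorisation of the uniform measure on a Euclidean ball, once the radii are fixed the directions $u_i:=f_i/\norm{f_i}$ become independent and uniformly distributed on the unit sphere of ${\mathfrak k}$. Hence $Z_i:=\langle u_i,w_i/\norm{w_i}\rangle$ (put $Z_i=0$ if $g_i=0$) satisfies $\E[Z_i\mid u_0,\dots,u_{i-1}]=0$ and $\abs{Z_i}\le1$, so the quantities $\norm{f_i}\norm{g_i}Z_i$ form a bounded martingale difference sequence and $NS=\sum_i\norm{f_i}\norm{g_i}Z_i$.

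The heart of the proof is now a single application of the Azuma--Hoeffding inequality, carried out conditionally on the radii: $\abs{S}>t$ occurs with conditional probability at most $2\exp\bigl(-N^2t^2/(2\sum_i\norm{f_i}^2\norm{g_i}^2)\bigr)$. Here the role of the conditioning is crucial, since it turns the a priori random quantities $\norm{f_i}$ into fixed increment bounds, and the single global constraint $\sum_i\norm{f_i}^2=N\norm{f}_2^2\le NR_N^2$ (valid for every admissible radii configuration) yields $\sum_i\norm{f_i}^2\norm{g_i}^2\le\norm{g}_\infty^2NR_N^2$. The conditional bound therefore becomes $2\exp\bigl(-Nt^2/(2\norm{g}_\infty^2R_N^2)\bigr)$, uniform in the radii and hence valid unconditionally. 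Choosing $t=\tfrac12\sin\e\,R_N\norm{g}_2$ cancels the factor $R_N^2$ and produces $2e^{-cN\e^2}$ with a positive constant $c$ of order $\norm{g}_2^2/\norm{g}_\infty^2$ for small $\e$; adding the negligible radial term of the first paragraph completes the estimate. I expect the genuine difficulty to be concentrated in the reduction of the second paragraph --- recognising the lower-triangular dependence of $w_i$ on $f$ and passing to the radii-conditioned directions --- after which the tail bound is routine. Note that no growth hypothesis on $R_N$ is used, in agreement with the lemma being stated for an arbitrary sequence of positive reals.
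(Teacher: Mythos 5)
Your proof is correct, but it takes a genuinely different route from the paper's. The paper reduces the random direction $\Ad_{\rho_N^f}g$ to a \emph{fixed} one: using the commutation identity of Lemma \ref{l:adpexp=adrho} it rewrites $\langle f,\Ad_{\rho_N^f}g\rangle=\langle -f^{\ast-1},g\rangle$, then invokes the measure-preservation of the inversion map (Lemma \ref{l:f-1jacobian}) to conclude that this quantity has the same distribution as $\langle f,g\rangle$ with $g$ fixed, and finishes with L\'evy's concentration inequality for the $1$-Lipschitz functional $f\mapsto\langle f,g\rangle$ on the ball, plus concentration on the spherical shell. You instead keep the direction random and exploit the causal (lower-triangular) dependence of $\rho_N^f$ on the coordinates $f_0,\dots,f_{i-1}$ directly: conditioning on the radii via the blockwise polar factorisation of $\nu_{N,R_N}$ (valid because the ball constraint involves only the radii) makes the directions $u_i$ i.i.d.\ uniform on spheres, and Azuma--Hoeffding for the resulting martingale difference sequence gives the tail bound. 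Interestingly, both arguments ultimately rest on the same structural fact --- the triangular dependence that the paper feeds into a Jacobian computation, you feed into a filtration --- but your version bypasses Lemmas \ref{l:adpexp=adrho} and \ref{l:f-1jacobian} entirely, which is a real economy since the measure-preservation lemma is used in the paper only for this purpose. What the paper's route buys in exchange: its constants $C,c$ are universal once $g$ is normalised by $\norm{g}_2=1$, and the argument works for arbitrary $g\in L^2(\I,\mathfrak{k})$, whereas your expansion $S=\tfrac1N\sum_i\langle f_i,w_i\rangle$ with $w_i=\Ad_{r(i/N)}g_i$ presupposes that $g$ is constant on the intervals of $P_N$ (harmless in context, since the lemma is only applied to simple $g$ adapted to the partition, as in Lemma \ref{l:I}), and your constant $c$ degrades with the ratio $\norm{g}_\infty/\norm{g}_2$ --- again harmless here because $g$ is fixed.
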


\begin{proof}
Without loss of generality we may suppose $\norm{g}_2=1$. Apply the Paul L\'evy concentration inequality for $1$-Lipschitz maps on the Euclidean ball of unit radius and dimension $dN=\Theta(N)$ (\cite{ledoux}, Prop. 2.9, p. 30) to the map $f\mapsto \langle f,g\rangle$. For every $\e>0$
\[\nu_N\left\{f\in B_N\colon \left\vert\langle f,g\rangle\right\vert>\e\right\}< 2\exp(-c\e^2N).\]
Also, for any fixed $\e>0$, the measure of the $\Theta(N)$-dimensional unit ball concentrates exponentially on the spherical $\e$-shell.
Consequently, 
\begin{align*}
& \nu_{N,R_N}\left\{f\in R_NB_N\colon \left\vert\langle f,g\rangle\right\vert>\norm f\cdot \e\right\} \\
&\leq
\nu_{N,R_N}\left\{f\in R_NB_N\colon \left\vert\langle f,g\rangle\right\vert>\frac 12 R_N\e\right\}+\nu_{N,R_N}\left(\frac{R_N}{2}B_N \right)\\
&=\nu_{N}\left\{f\in B_N\colon \left\vert\langle f,g\rangle\right\vert>\frac {\e}2 \right\}+\nu_{N}\left(\frac{1}{2}B_N \right)\\
& < C\exp(-c^\prime\e^2N).
\end{align*}

We have, using Lemma \ref{l:adpexp=adrho} and the unitarity of the adjoint representation,
\begin{align*}
\langle f,\Ad_{\rho_N^f}g\rangle &
= \langle (\Ad_{\rho_N^f})^{\ast}f,g\rangle \\
&= \langle \Ad_{(\rho_N^f)^{-1}}f,g\rangle \\
&= \langle \Ad_{(\Pi\exp f)^{-1}}f,g\rangle \\
& = \langle -f^{\ast-1},g\rangle.
\end{align*}
Since the map $f\mapsto -f^{\ast-1}$ is measure-preserving (Lemma \ref{l:f-1jacobian}),
\begin{align*}
&\nu_{N,R_N}\left\{f\in R_NB_N\colon \left\vert\langle f,\Ad_{\rho_N^f}g\rangle\right\vert>\norm{f}
\e\right\}\\
&= \nu_{N,R_N}\left\{f\in R_NB_N\colon \left\vert\langle -f^{\ast-1},g\rangle\right\vert>\norm{f}\e\right\} \\
&= \nu_{N,R_N}\left\{f\in R_NB_N\colon \left\vert\langle f,g\rangle\right\vert>\norm f\e\right\}\\
& <C\exp(-c^\prime \e^2 N).
\end{align*}
For sufficiently small values of $\alpha$, the function $\arccos\alpha$ is Lipschitz continuous, so we conclude that for suitable $C^\prime,c^{\prime\prime}$ and all $\e>0$ small enough,
\[\nu_{N,R_N}\left\{f\in R_NB_N\colon \left\vert\angle(f,\Ad_{\rho_N^f}g)-\frac\pi 2\right\vert>\e\right\} <C^\prime\exp(-c^{\prime\prime} \e^2 N).\]
\end{proof}

Now we are ready to provide an estimate for $(I)$.

\begin{lemma}
The total variation distance between the measure $\nu_{N,R_N}$ and its direct image under $f\mapsto f + \Ad_{\rho^f_N}g$ goes to zero as $N\to\infty$, assuming $R_N=\omega(\sqrt N)$. 
\label{l:II}
\end{lemma}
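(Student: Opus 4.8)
The plan is to exploit that, by Lemma \ref{l:id+const}, the map $T\colon f\mapsto f+\Ad_{\rho^f_N}g$ is Lebesgue-measure-preserving on $V_N$, and moreover globally injective: in the matrix coordinates $f_0,\dots,f_{N-1}$ it is triangular, its $j$-th component being $f_j$ plus a function of $f_0,\dots,f_{j-1}$ alone, so it inverts recursively. Hence the direct image $T_\ast\nu_{N,R_N}$ is the normalized Lebesgue measure on $T(R_NB_N)$, a set of the same volume as $R_NB_N$. For two uniform measures on equal-volume sets the total variation distance is the relative volume of either difference set, and since $T$ preserves volume one has $\abs{R_NB_N\setminus T(R_NB_N)}=\abs{T(R_NB_N)\setminus R_NB_N}=\abs{\{f\in R_NB_N\colon \norm{T(f)}_2>R_N\}}$. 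Thus the problem reduces to showing
\[d_{\mathrm{TV}}(\nu_{N,R_N},T_\ast\nu_{N,R_N})=\nu_{N,R_N}\{f\in R_NB_N\colon \norm{T(f)}_2>R_N\}\to 0.\]

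Next I would record that the displacement $\Ad_{\rho^f_N}g$ has constant norm $\norm{g}_2$, the adjoint action being pointwise orthogonal, so that $\norm{T(f)}_2^2=\norm{f}_2^2+2\langle f,\Ad_{\rho^f_N}g\rangle+\norm{g}_2^2$ and the event $\{\norm{T(f)}_2>R_N\}$ forces $f$ into a boundary shell only to the extent allowed by the cross term $\langle f,\Ad_{\rho^f_N}g\rangle$. The naive estimate, which merely moves every point a distance $\norm{g}_2$, places the symmetric difference in a shell of width $2\norm{g}_2$ whose relative volume is of order $dN/R_N$; this would demand the far stronger hypothesis $R_N=\omega(N)$, and is the main obstacle. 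The remedy is the orthogonality concentration of Lemma \ref{l:angleconc}: off an exponentially small set the cross term is negligible, so the \emph{effective} radial displacement is much smaller than $\norm{g}_2$ and can be tuned.

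Concretely, normalizing $\norm{g}_2=1$, for a parameter $\e>0$ I split on the correlation event $E_\e=\{f\colon \abs{\langle f,\Ad_{\rho^f_N}g\rangle}>R_N\e\}$. By the inner-product concentration established in the proof of Lemma \ref{l:angleconc} (valid for every $\e$, the set being empty once $\e\ge1$ since $\abs{\langle f,\Ad_{\rho^f_N}g\rangle}\le\norm{f}_2\le R_N$), one has $\nu_{N,R_N}(E_\e)<Ce^{-c\e^2N}$. On the complement $\norm{T(f)}_2^2\le\norm{f}_2^2+2R_N\e+1$, so $\{\norm{T(f)}_2>R_N\}\cap E_\e^c\subseteq\{\norm{f}_2^2>R_N^2-2R_N\e-1\}$, whose $\nu_{N,R_N}$-measure is $1-\bigl(1-\tfrac{2R_N\e+1}{R_N^2}\bigr)^{dN/2}\le\tfrac{dN}{2}\cdot\tfrac{2R_N\e+1}{R_N^2}$ once the argument lies in $[0,1]$ (true for large $N$, as below $\e\ll R_N$). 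Combining,
\[\nu_{N,R_N}\{f\colon \norm{T(f)}_2>R_N\}\le Ce^{-c\e^2N}+\frac{dN\e}{R_N}+\frac{dN}{2R_N^2}.\]

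Finally I would take $\e=\e_N=\sqrt{R_N}\,N^{-3/4}$ and, writing $q_N=R_N/\sqrt N\to\infty$ (the hypothesis $R_N=\omega(\sqrt N)$), verify $\e_N^2N=q_N\to\infty$, which kills the first term, $dN\e_N/R_N=d/\sqrt{q_N}\to0$ the second, and $dN/2R_N^2=d/2q_N^2\to0$ the third; note all three hold with no upper restriction on $R_N$, so $\e_N\to0$ is not needed. The only genuinely delicate point is the second paragraph: seeing that the crude shell bound is too lossy and that replacing the actual displacement $\norm{g}_2$ by the much smaller effective radial displacement furnished by the angle-$\approx\pi/2$ concentration is exactly what brings the required growth of $R_N$ down from $\omega(N)$ to $\omega(\sqrt N)$; the surrounding steps are routine volume computations in the Euclidean ball of dimension $dN$.
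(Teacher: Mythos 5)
Your proof is correct, and it rests on the same two pillars as the paper's: the measure preservation of $T\colon f\mapsto f+\Ad_{\rho^f_N}g$ (Lemma \ref{l:id+const}) and the concentration of the inner product $\langle f,\Ad_{\rho^f_N}g\rangle$ established inside the proof of Lemma \ref{l:angleconc} (which, as you note, is valid for all $\e>0$, the $\arccos$ step being the only place where smallness of $\e$ matters). The route from there, however, is genuinely different. The paper never writes the total variation distance in closed form: it fixes a margin $\e_N\in o(R_N/N)\cap\omega(N^{-1/2})$, argues via a tangent-vector estimate that off an exceptional set the image of $f$ lies in the enlarged ball $(R_N+C\e_N)B_N$, shows that the ratio of the two ball volumes tends to $1$, and then runs an approximation chain over arbitrary Borel sets $A$ to conclude $\nu_{N,R_N}(\phi(A))\to\nu_{N,R_N}(A)$ uniformly. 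You instead exploit the global bijectivity of $T$ on $V_N$ (the recursive, triangular inversion), so that $T_\ast\nu_{N,R_N}$ is the uniform measure on a set of the same volume and the total variation distance is \emph{exactly} the overshoot probability $\nu_{N,R_N}\{\norm{T(f)}_2>R_N\}$; the Pythagorean expansion of $\norm{T(f)}_2^2$ together with an explicit shell-volume bound then gives the quantitative estimate $Ce^{-c\e^2N}+dN\e/R_N+dN/(2R_N^2)$, optimized at $\e_N=\sqrt{R_N}\,N^{-3/4}$. Your version buys three things: an explicit rate; a replacement of the paper's slightly informal tangent-line geometry by an exact identity; and an explicit statement of the injectivity of $T$, which the paper uses tacitly when it equates $\lambda_N(\phi(A\cap D_N))$ with $\lambda_N(A\cap D_N)$ (a Jacobian-one map that failed to be injective would only give an inequality there). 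Like the paper's proof, yours needs only $R_N=\omega(\sqrt N)$, the upper restriction $R_N=o(N)$ being consumed solely by Lemma \ref{l:I}.
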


\begin{proof}
By assumption, we have $R_N/N=\omega(N^{-1/2})$. Select a sequence 
\[\e_N\in o\left(\frac{R_N}{N}\right)\cap \omega(N^{-1/2}).\]
Note for future use (Eq. (\ref{eq:future})) that this implies in particular 
\[\frac{\e_N}{R_N}N\to 0.\]
Also, 
$\e_N^2N\to\infty$, and Lemma \ref{l:angleconc} tells us that the angle that a random element $f\in R_NB_N$ forms with $\Ad_{\rho^f_N}g$ is within the margin $\pi/2\pm \e_N$ with probability approaching one as $N\to\infty$. Now, if a Euclidean ball has radius $R_N$, then the endpoint of a vector of a fixed length tangent to the sphere is contained within the distance $O(R_N^{-1})\subseteq o(N^{-1/2})\subseteq o(\e_N)$ of the surface. We conclude: with a probability asymptotically approaching one, for a random element $f\sim \nu_{N,R_N}$ the vector $f + \Ad_{\rho^f_N}g$ is contained in the ball of radius $R_N+C\e_N$ for some $C>0$ independent of $N$. The ratio of the Euclidean volume of the $N$-dimensional ball of radius $R_N+C\e_N$ to the volume of the concentric ball of radius $R_N$ is
\begin{align}
\frac{\left(R_N+ C\e_N\right)^N}{\left(R_N\right)^N}&=
\left(1+ \frac{C\e_N}{R_N}\right)^N \nonumber \\
&= \left(1+ \frac{C\e_N}{R_N}\right)^{\left(\frac{R_N}{C\e_N}\right) \left(C\frac{\e_N}{R_N}N\right)} \nonumber\\
& \approx \exp\left(C\frac{\e_N}{R_N}N\right) \label{eq:future}\\
&\to 1\mbox{ as }N\to\infty. \label{eq:twoballs}
\end{align}

Denote for simplicity by $\phi$ the map $f\mapsto f + \Ad_{\rho^f_N}g$. Let $A$ be a Borel subset of $V_N$. We will show that $\nu_{N,R_N}(\phi(A))\to \nu_{N,R_N}(A)$ as $N\to\infty$, and the convergence is uniform in all such $A$. Denote
\[D_N=\left\{
f\in R_NB_N\colon \left\vert\angle\left(f,\Ad_{\rho_N^f}g\right)
-\frac{\pi}2\right\vert<\e_N
\right\}.\]
We have $\nu_{N,R_N}(D_N)\to 1$, and so 
\begin{equation}
\nu_{N,R_N}(A\cap D_N)\to \nu_{N,R_N}(A).
\label{eq:1conv}
\end{equation}
Also, $\phi(A\cap D_N)\subseteq (R_N+C\e_N)B_N$. 
Let $\e>0$ be any fixed value.
As $\phi$ preserves Lebesgue measure (Lemma \ref{l:id+const}), and in view of Eq. (\ref{eq:twoballs}), we have for all sufficiently large $N$
\begin{align*}
\nu_{N,R_N}(A) & \overset{\e}\approx
\nu_{N,R_N}(A\cap D_N) \\ &= \frac{\lambda_N(A\cap D_N)}{\lambda_N(R_NB_N)}\\
& =\frac{\lambda_N(\phi(A\cap D_N))}{\lambda_N(R_NB_N)} \\
&= \frac{\lambda_N(R_NB_N\cap\phi(A\cap D_N))}{\lambda_N(R_NB_N)} + \frac{\lambda_N(\phi(A\cap D_N))\setminus R_NB_N)}{\lambda_N(R_NB_N)} \\
& \overset{\e}\approx \frac{\lambda_N(R_NB_N\cap\phi(A\cap D_N))}{\lambda_N(R_NB_N)} \\
&= \frac{\lambda_N(R_NB_N\cap\phi(A))}{\lambda_N(R_NB_N)} - \frac{\lambda_N(R_NB_N\cap\phi(A\setminus D_N))}{\lambda_N(R_NB_N)}
\\
& \overset{\e}\approx \nu_{N,R_N}(\phi(A)).
\end{align*}
All the rates of convergence above only depend on $N$ and $g$ and are independent of $A$, which implies the desired statement.
\end{proof}

\begin{proof}[Proof of Lemma \ref{l:main}] 
In the notation of the Lemma and the subsequent discussion, we have:
\begin{align*}
&\left\vert\int F(f)\,d\nu_{N,R_N} - \int F(f)\,d(\nu_{N,R_N}\ast g)\right\vert 
\\
& \leq \int \left\vert F(f) - F(f\ast g)\right\vert \,d\nu_{N,R_N} \\
&\leq 
\int\left\vert F(f) - F(f + \Ad_{\rho^f_N}g) \right\vert\,d\nu_{N,R_N} +
\int\left\vert F(f + \Ad_{\rho^f_N}g) - F(f\ast g) \right\vert\,d\nu_{N,R_N}.
\end{align*}
The first integral converges to zero when $N\to\infty$ uniformly in $F$ because of Lemma \ref{l:II}, where we use the assumption $R_N=\omega(\sqrt N)$. For the second integral we have, since $F$ is a $1$-Lipschitz function and using Eq. (\ref{eq:split}):
\begin{align*}
\int\left\vert F(f + \Ad_{\rho^f_N}g) - F(f\ast g) \right\vert\,d\nu_{N,R_N} 
& \leq \int\left\Vert f + \Ad_{\rho^f_N}g - f\ast g \right\Vert_2\,d\nu_{N,R_N} \\
&= \int\left\Vert\Ad_{\Pi\exp f}g-\Ad_{\rho^f_N}g\right\Vert_2\,d\nu_{N,R_N} \\
&\leq \sup_{f\in R_NB_N}\left\Vert\Ad_{\Pi\exp f}g-\Ad_{\rho_N}g\right\Vert_2,
\end{align*}
which quantity converges to zero as $N\to\infty$ (of course uniformly in $F$) by Lemma \ref{l:I}. Here the assumption $R_N=o(N)$ is needed.
\end{proof}

\section{Amenability of path groups}
\label{s:proofs}

The amenability for the based path group $H^1_0(\I,K)$ and the free path group $H^1(\I,K)$ is deduced from Lemma \ref{l:main} by totally standard tools. 

Select a sequence of positive reals $R_N=o(N)\cap \omega(\sqrt N)$.
Fix a non-principal ultrafilter $\mathcal U$ on the natural numbers and define a mean $\phi$ for every left uniformly continuous bounded function $F$ on the group $H^1_0(\I,K)$ as follows:
\[\phi(F)=\lim_{N\to{\mathcal U}}\int F(f)\,d\nu_{N,R_N}.\]
Since the sequence of values of the integrals is uniformly bounded by $\norm{F}_{\infty}$, the ultralimit is well defined. Obviously, $\phi$ is a mean. 

Let $_gF$ denote the right translation of $F$ by an element $g$:
\[{\,}_gF(f)=F(f\ast g).\]

According to Lemma \ref{l:main}, 
\begin{equation}
\phi(_gF)=\phi(F)
\label{eq:phi}
\end{equation}
for every bounded $1$-Lipschitz function $F$ and each element $g\in H^1_0(\I,K)$
whose right logarithmic derivative is a simple function. Renormalizing, we see that the same holds for every bounded Lipschitz function $F$. 

Since the bounded Lipschitz functions on a metric space are uniformly dense in the space of uniformly continuous bounded functions (see e.g. \cite{GJ}, Corollary 1) and $\phi$ has norm one and so is continuous, we deduce the right-invariance of $\phi$ on all functions $F\in\LUCB(H^1_0(\I,K))$ and all elements $g$
whose right logarithmic derivative is a simple function. The simple functions are dense in $L^2$, and it is easy to see from the definition of a left uniformly continuous function $F$ that whenever $g_n\to g$, we have $_{g_n}F\to _gF$ uniformly. This finally implies that $\phi$ is a right-invariant mean on the space of left uniformly continuous bounded functions. But this is equivalent to amenability, as we have noted in the Introduction.

The mean as defined in the Eq. (\ref{eq:phi}) is in fact left-invariant as well, according to one of the main technical results of \cite{P20} (Lemma 11). Thus, we obtain a bi-invariant mean on the space $\RUCB(H^1_0(\I,K))$ which is a part of the statement of Theorem \ref{th:bi}.

To deduce amenability of the group $H^1(\I,K)$, recall that a topological group $G$ is amenable whenever a normal subgroup $H$ and the topological quotient group $G/H$ are amenable (\cite{E}, II.1). Now note that the group of based paths $H^1_0(\I,K)$ is a normal subgroup of $H^1(\I,K)$, with the compact quotient group $K$.

In order to show that the group $H^1(\I,K)$ satisfies the conclusion of Theorem \ref{th:bi}, we need the following.

\begin{lemma}
A topological group $G$ is both amenable and skew-amenable if and only if it admits a bi-invariant mean on the space $\RUCB(G)$ (or, equivalently, $\LUCB(G)$).
\label{l:amskew}
\end{lemma}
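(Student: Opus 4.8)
The plan is to treat the two implications separately, with essentially all the substance in the forward direction. Throughout I write $(L_gf)(x)=f(g^{-1}x)$ and $(R_gf)(x)=f(xg)$ for the left and right translates, and $J\colon f\mapsto\check f$, $\check f(g)=f(g^{-1})$, for the isometric isomorphism $\RUCB(G)\to\LUCB(G)$ recalled in the Introduction, which interchanges left- and right-invariance of means. The easy ``if'' direction goes as follows: if $M$ is a bi-invariant mean on $\RUCB(G)$, then being left-invariant it witnesses amenability at once, and to get skew-amenability I would transport $M$ through $J$, setting $M'=M\circ J$ on $\LUCB(G)$. A one-line computation gives $J(L_gf)=R_g(Jf)$ for $f\in\LUCB(G)$, so the right-invariance of $M$ turns into the left-invariance of $M'$, which is exactly skew-amenability.

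For the ``only if'' direction, amenability supplies a left-invariant mean $L$ on $\RUCB(G)$, and skew-amenability a left-invariant mean $n$ on $\LUCB(G)$; pushing $n$ through $J$ (using $J(R_gf)=L_g(Jf)$) yields a right-invariant mean $R=n\circ J$ on $\RUCB(G)$. The task is to combine $L$ and $R$ into one bi-invariant mean, for which I would use the classical iterated-mean construction: for $f\in\RUCB(G)$ write $f_x$ for the function $y\mapsto f(xy)$, set $\phi_f(x)=R(f_x)$, and define
\[ M(f)=L(\phi_f). \]
That $M$ is a positive unital functional of norm one is immediate. Bi-invariance comes from swapping the order in which the two variables are handled: $(L_gf)_x=f_{g^{-1}x}$ gives $\phi_{L_gf}=L_g\phi_f$, so left-invariance of $L$ forces $M(L_gf)=M(f)$; while $(R_gf)_x=R_g(f_x)$, together with right-invariance of $R$, gives $\phi_{R_gf}=\phi_f$, whence $M(R_gf)=M(f)$.

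The main obstacle — the only step beyond bookkeeping — is that this iterated mean be well defined, i.e. that both applications of $R$ and $L$ take place in the spaces where those means live. For the inner application I would note that $\RUCB(G)$ is stable under both translations: right translation preserves the right uniformity outright, and left translation by $g$ sends the entourage $\{(s,t):st^{-1}\in V\}$ to $\{(s,t):st^{-1}\in gVg^{-1}\}$, again an entourage; hence each $f_x$ and each $R_g(f_x)$ lies in the domain of $R$. The genuinely delicate point is that $\phi_f\colon x\mapsto R(f_x)$ must itself lie in $\RUCB(G)$, the domain of $L$. For this I would estimate, using that $R$ is positive of norm one,
\[ \abs{\phi_f(x)-\phi_f(x')}=\abs{R(f_x-f_{x'})}\leq\norm{f_x-f_{x'}}_\infty=\sup_y\abs{f(xy)-f(x'y)}, \]
and observe that $(xy)(x'y)^{-1}=x(x')^{-1}$ does not depend on $y$; the right uniform continuity of $f$ then bounds this supremum by any prescribed $\e$ as soon as $x(x')^{-1}$ lies in a suitable identity neighbourhood. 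This is precisely the right uniform continuity of $\phi_f$, and it is exactly here that working with the \emph{right} uniform structure (and not the left) is essential: had we tried to place the inner function in $\LUCB(G)$, the product $(xy)(x'y)^{-1}$ would not simplify and the uniform-in-$y$ bound would fail.

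Finally, the parenthetical equivalence of the $\RUCB$- and $\LUCB$-formulations follows from $J$ alone: since $J$ interchanges left- with right-invariance, it carries bi-invariant means on $\RUCB(G)$ to bi-invariant means on $\LUCB(G)$ and back, so such a mean exists on one space precisely when it exists on the other.
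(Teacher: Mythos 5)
Your proof is correct, but it is not the argument the paper gives; it is instead exactly the alternative that the paper's own Remark (immediately following the lemma) attributes to F.~Martin Schneider. The paper proves the nontrivial direction by a fixed-point argument: it forms the set $RM$ of all right-invariant means on $\RUCB(G)$ (non-empty because skew-amenability, pushed through inversion, yields a right-invariant mean on $\RUCB(G)$), checks that $RM$ is weak$^\ast$-compact, convex, and invariant under the natural left action of $G$ on means, and then invokes the fixed point property of amenable groups for affine continuous actions on compact convex sets (Eymard) to extract a mean in $RM$ that is also left-invariant. You instead combine the two means directly by the classical iterated-mean (convolution) construction $M(f)=L(\phi_f)$, $\phi_f(x)=R(f_x)$ — the technique the Remark traces to Greenleaf, Berglund--Junghenn--Milnes, and Schneider--Thom. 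Your verification of the key points is sound: the intertwining identities $J(L_gf)=R_g(Jf)$ and $J(R_gf)=L_g(Jf)$ do convert skew-amenability into a right-invariant mean $R$ on $\RUCB(G)$ (the same inversion step the paper uses implicitly for non-emptiness of $RM$); $\RUCB(G)$ is indeed stable under both translations (right translations preserve the modulus, left translations conjugate the entourage); and you correctly isolated the one genuinely delicate point, namely that $\phi_f$ is again right uniformly continuous, which works precisely because $(xy)(x'y)^{-1}=x(x')^{-1}$ is independent of $y$. What the two routes buy: yours is elementary and self-contained, using nothing beyond positivity and norm-one estimates, whereas the paper's is computationally lighter but leans on the fixed-point characterization of amenability, a nontrivial theorem in itself; both hinge on the inversion map to manufacture right-invariance on $\RUCB(G)$ out of skew-amenability.
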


\begin{proof}
Sufficiency is trivially true. To prove necessity, let $RM$ denote the set of all right-invariant means on $\RUCB(G)$. This is a convex set in the dual space to $\RUCB(G)$, and by assumption it is non-empty.
Equipped with the weak$^\ast$ topology, this set is compact. Indeed, if $\xi$ is a mean that is not right-invariant, there are $g\in G$ and $f\in\RUCB(G)$ with $\e=\abs{\phi(f)-\phi(_gf)}>0$. The weak$^\ast$ open neighbourhood of $f$ (in the compact space of all means) consisting of all $\psi$ with $\abs{\psi(f)-\phi(f)}<\e/2$ and $\abs{\psi(_gf)-\phi(_gf)}<\e/2$  does not contain any right-invariant means. 

Also, $RM$ is closed under the left action by $G$, given by $^g\phi(f)=\phi(^{g^{-1}}f)$, where $^gf(x)=f(g^{-1}x)$. Indeed, notice that
\[{\,}^{g^{-1}}(_hf)(x)= f(gxh)=_h\left(^{g^{-1}}f\right)(x),\]
and so,
if $\phi$ is right-invariant, then so is $^g\phi$:
\begin{align*}
{\,}^g\phi(_hf)&= \phi\left(^{g^{-1}}(_hf)\right) \\
&= \phi\left(_h\left(^{g^{-1}}f\right)\right) \\
 &= \phi\left(^{g^{-1}}f\right) \\
&={\,}^g\phi(f).
\end{align*}
The left action of $G$ on $RM$ is by affine transformations:
\[{\,}^g\left(t\phi+(1-t)\psi\right) = t{\,}^g\phi + (1-t){\,}^g\psi.\]
Since $G$ is amenable, it has the fixed point property for the affine continuous actions on convex compact sets (see e.g. \cite{E}, Th\'eor\`eme 1), and therefore there is a mean $\phi\in RM$ that is left-invariant as well.
\end{proof}

\begin{remark}
As pointed out to the author by F. Martin Schneider, a shorter way to prove the  Lemma is by taking the convolution of a left- and a right-invariant mean on $\RUCB(G)$. For the discrete groups, the technique was used in \cite{greenleaf}, Lemma 1.1.3, then it was developed in a general context of semigroups in \cite{BJM}, Section 2.2, and adapted to the means on right uniformly continuous bounded functions in \cite{ST}, see Lemma 3.2.
\end{remark}

Now we only have to recall that the group of finite energy paths $H^1(\I,K)$ is skew-amenable \cite{P20}.

\section{Amenability of loop groups}
\label{s:loop}

Here is a tool allowing to deduce the amenability of loop groups from the amenability of corresponding path groups. 

\begin{theorem}
A co-compact normal subgroup of an amenable Polish group is amenable.
\label{th:coco}
\end{theorem}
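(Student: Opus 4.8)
The plan is to realise an $H$-invariant probability measure on an arbitrary compact $H$-space as the fibrewise trace of a $G$-invariant measure on an associated bundle over the compact quotient $Q=G/H$, invoking amenability of $G$ only through its invariant-measure property. Throughout I write $\pi\colon G\to Q$ for the quotient homomorphism onto the compact metrisable group $Q$, and $e_Q=\pi(e)$, so that $H=\pi^{-1}(e_Q)$ is precisely the stabiliser of $e_Q$ under the transitive left action of $G$ on $Q$. Since $H$ is separable, it suffices, by a standard inverse-limit reduction, to produce an invariant measure for every continuous action of $H$ on a compact \emph{metrisable} space $X$.

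First I would form the associated bundle $E=G\times_H X=(G\times X)/H$, where $h\cdot(g,x)=(gh^{-1},h\cdot x)$, carrying the continuous $G$-action $g'\cdot[g,x]=[g'g,x]$ and the equivariant projection $p\colon E\to Q$, $p[g,x]=\pi(g)$. The key point, which I expect to be the main obstacle, is that $E$ is compact. For this I would first show that co-compactness together with Polishness yields a \emph{compact transversal}, i.e.\ a compact $C\subseteq G$ with $CH=G$: as $\pi$ is a continuous open surjection of Polish spaces onto the compact $Q$, such a $C$ can be built by a Cauchy/tree construction in a fixed complete metric on $G$ (equivalently, via the compact-covering behaviour of open maps between Polish spaces). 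Granting this, $E$ is the image of the compact set $C\times X$ under the open quotient map $G\times X\to E$; moreover the $H$-action on $G\times X$ is free and proper (because the closed subgroup $H$ acts properly on $G$ by translation), so $E$ is Hausdorff, hence a compact metrisable space.

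Amenability of $G$ then provides a $G$-invariant Borel probability measure $\bar m$ on $E$. Its push-forward $p_\ast\bar m$ is a $G$-invariant measure on the homogeneous space $Q$, hence equals the normalised Haar measure $\mu_Q$. The central fibre $p^{-1}(e_Q)$ is canonically homeomorphic to $X$ via $x\mapsto[e,x]$, with $H$ acting on it by the original action. Fixing a Borel section $s\colon Q\to G$ of $\pi$ with $s(e_Q)=e$ and continuous at $e_Q$, I retract $E$ onto this fibre by $\Psi(y)=s(p(y))^{-1}\cdot y\in p^{-1}(e_Q)\cong X$. A direct computation using the $G$-invariance of $\bar m$ with an element $h_0\in H$ (which fixes $e_Q$) gives $\Psi(h_0\cdot y)=h_0^{(q)}\cdot\Psi(y)$, where $q=p(y)$ and $h_0^{(q)}=s(q)^{-1}h_0\,s(q)\in H$ (this is where normality of $H$ enters); and $h_0^{(q)}\to h_0$ as $q\to e_Q$, by continuity of $s$ at $e_Q$.

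To cancel this conjugation twist I would localise at $e_Q$. Choosing an approximate identity $(\chi_n)$ in $C(Q)$ concentrated at $e_Q$, I set $\nu_n(\phi)=\int_E\phi(\Psi(y))\,\chi_n(p(y))\,d\bar m(y)$ for $\phi\in C(X)$; since $p_\ast\bar m=\mu_Q$ these are probability measures on $X$. The invariance identity rewrites $\nu_n(\phi)=\int_E\phi(h_0^{(q)}\cdot\Psi(y))\,\chi_n(q)\,d\bar m$, and because $h_0^{(q)}\to h_0$ uniformly on the compact fibre as $q\to e_Q$, the sequence $(\nu_n)$ is asymptotically $H$-invariant: $\nu_n(\phi)-\nu_n(\phi\circ h_0)\to 0$ for every $\phi\in C(X)$ and $h_0\in H$. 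Passing to a weak$^\ast$ cluster point $\nu^\ast$ of $(\nu_n)$—securing invariance first along a countable dense subset of $H$ on one subsequence, then for all of $H$ by density and the closedness of the invariance condition—yields an $H$-invariant probability measure on $X$, proving $H$ amenable. It bears stressing that $G$ need not carry a Haar measure; this is exactly why the fibre measure is extracted by concentrating $\mu_Q$ on the compact quotient at $e_Q$, rather than by averaging over $G$.
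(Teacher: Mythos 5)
Your proof is correct, and although it starts from the same skeleton as the paper --- induce the $H$-action on a compact metrisable $X$ to a compact $G$-space ($E=G\times_H X$ in your notation, $X\times_H G$ in the paper's, the difference being only a left/right convention) and apply amenability of $G$ to obtain an invariant measure $\bar m$ on it --- the extraction of an $H$-invariant measure on $X$ from $\bar m$ is genuinely different. The paper applies the Rokhlin disintegration theorem to $\bar m$ over $H\backslash G$ and invokes the \emph{essential uniqueness} of the disintegration to conclude that, for each fixed $h\in H$, almost every fibre measure $\mu_{Hg}$ is $\bar gh\bar g^{-1}$-invariant; running over a countable dense subgroup of $H$ then produces a single fibre whose measure, transported to $X$ by a Borel selector, is invariant under a dense subgroup of $H$, hence under all of $H$. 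You never disintegrate: you observe that $p_\ast\bar m$ must be Haar measure on $Q=G/H$, retract $E$ onto the central fibre via a Borel section $s$ continuous at $e_Q$, and localize with an approximate identity at $e_Q$, so that the conjugation twist $h_0^{(q)}=s(q)^{-1}h_0s(q)$ (this is where you, like the paper, use normality) dies in the limit, and any weak$^\ast$ cluster point of your $\nu_n$ is $H$-invariant. (Your closing countable-dense-subgroup step is superfluous: since asymptotic invariance holds for every fixed $h_0\in H$ along the full sequence, every cluster point is automatically invariant under all of $H$.) As for what each route buys: your compactness proof for $E$ via a compact transversal $C$ with $CH=G$ genuinely needs Polishness --- it is Bourbaki's compact-covering property of open maps from complete metric spaces, exactly as discussed in Remark \ref{rem:induced} --- whereas the paper's Vietoris-topology argument (Lemma \ref{l:xxhg}) works for co-compact subgroups of arbitrary topological groups; on the other hand, the paper's reliance on essential uniqueness of disintegration is precisely the step that Remark \ref{rem:spectra} flags as the obstruction to extending the theorem beyond Polish groups, and your localization argument sidesteps it entirely, needing only uniqueness of Haar measure on the compact quotient. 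Two assertions in your sketch deserve explicit justification, though both hold for Polish $G$: the compact transversal (cite Bourbaki or Arkhangel'skii, as the paper does), and the Borel section continuous at $e_Q$, which you can construct by fixing a neighbourhood basis $U_k$ of $e$ in $G$ whose images $V_k=\pi(U_k)$ are open and shrink to $\{e_Q\}$, and patching together Borel sections of the open surjections $\pi|_{U_k}\colon U_k\to V_k$ over the differences $V_k\setminus V_{k+1}$, setting $s(e_Q)=e$.
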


\begin{remark}
A topological subgroup of an amenable Polish group is in general not amenable \cite{dlH1}. The same applies even in a favourable situation where the left and right uniformities of a group coincide and so amenability is equivalent to skew-amenability, and moreover, this behaviour seems to be the norm \cite{CT,AST}.

An analogous result to the above Theorem \ref{th:coco} for skew-amenable groups was obtained in \cite{P20}, and considerably strengthened in \cite{PS2} (removing the assumptions of normality and polishness). We do not know if the above result still holds without assuming that the subgroup is normal (while the assumption of the group being Polish does not seem very essential, see Remark \ref{rem:spectra}).
\end{remark}

Let us recall Mackey's construction of an induced action (see e.g. Becker and Kechris \cite{BK}, Thm. 2.3.5). Let $H$ be a closed topological subgroup of a topological group $G$, and let $H$ act continuously on a topological space $X$. Assume $G$ and $X$ Hausdorff. Equip the product $X\times G$ with the natural action of $H$ on the left: $h\cdot (x,g)=(h\cdot x, hg)$. Denote the $H$-orbit of $(x,g)$ by $[x,g]$. The orbit space $(X\times G)/H$, which we will denote $X\times_HG$ and equip with the quotient topology, carries a well-defined action of $G$ given by $\gamma\cdot [x,g]=[x,g\gamma^{-1}]$. The saturation of an open subset by the $H$-orbits is open. It follows that the action of $G$ on $X\times_HG$ is continuous, and the quotient map $q\colon X\times G \to X\times_H G$ is in fact open.

Hence, the Cartesian square of $q$ is open too (\cite{engelking}, Prop. 2.3.29), in particular is a quotient map. One readily verifies that the $H$-orbit equivalence relation on $X\times G$ is closed (as a subset of the square of the latter space). It follows that the diagonal of the square of $X\times_HG$ is closed, that is, the latter space is Hausdorff (\cite{gottschalk}, p. 123, (5)).

\begin{lemma}
If $X$ is compact and $H$ is co-compact in $G$, then $X\times_HG$ is compact.
\label{l:xxhg}
\end{lemma}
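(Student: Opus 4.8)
The plan is to exhibit a natural continuous surjection $p\colon X\times_HG\to H\backslash G$ onto the (compact) coset space and to show that it is a \emph{closed} map with \emph{compact fibres}; compactness of $X\times_HG$ then follows from the elementary fact that a closed continuous map with compact fibres over a compact base has compact domain.

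First I would define $p([x,g])=Hg$, where $H\backslash G$ denotes the space of right cosets $Hg$ with the quotient topology. This is well defined because replacing $(x,g)$ by $h\cdot(x,g)=(hx,hg)$ leaves the coset $Hg$ unchanged, and it is continuous because the composite $p\circ q$ equals $X\times G\xrightarrow{\mathrm{pr}_2}G\xrightarrow{\pi}H\backslash G$, the projection followed by the canonical quotient map $\pi$, while $q$ is a quotient map. By hypothesis $H$ is co-compact, so $H\backslash G$ (the coset space relevant to this left $H$-action) is compact. The fibre of $p$ over a coset $Hg_0$ equals $q(X\times\{g_0\})$: indeed $[x,hg_0]=[h^{-1}x,g_0]$, so every point lying over $Hg_0$ has the form $[x,g_0]$. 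Being a continuous image of the compact space $X$, each fibre is compact, and $X\times_HG$ is Hausdorff (as already established), so compactness of subsets behaves as expected.

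The key step is the closedness of $p$. Let $F\subseteq X\times_HG$ be closed. Then $q^{-1}(F)$ is a closed, $H$-invariant subset of $X\times G$. Since $X$ is compact, the projection $\mathrm{pr}_2\colon X\times G\to G$ is a closed map, so $S:=\mathrm{pr}_2(q^{-1}(F))$ is closed in $G$; moreover $S$ is $H$-saturated, i.e.\ $HS=S$, because $q^{-1}(F)$ is $H$-invariant. Now $p(F)=\pi(S)$, and since $\pi$ is a quotient map and $S=\pi^{-1}(\pi(S))$ is closed, the image $\pi(S)$ is closed in $H\backslash G$. Hence $p(F)$ is closed.

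Finally I would invoke the standard tube-type argument: given an open cover of $X\times_HG$, each compact fibre $p^{-1}(y)$ is covered by a finite subfamily with union $V_y$; closedness of $p$ makes $W_y:=H\backslash G\setminus p((X\times_HG)\setminus V_y)$ an open neighbourhood of $y$ with $p^{-1}(W_y)\subseteq V_y$; compactness of $H\backslash G$ yields finitely many $W_{y_i}$ covering the base, and the corresponding finite families cover $X\times_HG$. I expect the main obstacle to be the closedness of $p$: everything there rests on combining two facts---that projection along the compact factor $X$ is closed, and that a quotient map carries saturated closed sets to closed sets---so the care lies in verifying that $S$ is genuinely saturated. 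Alternatively one could produce a compact set $D\subseteq G$ with $HD=G$ and use $q(X\times D)=X\times_HG$, but extracting such a $D$ from the compactness of $H\backslash G$ would invoke a selection lemma and the Polishness of $G$, which the closed-map argument avoids.
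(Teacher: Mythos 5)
Your proof is correct, but it takes a genuinely different route from the paper's. Both arguments start the same way: you and the paper each introduce the equivariant projection onto the coset space ($p$, resp.\ $\pi\colon X\times_HG\to H\backslash G$) and observe that its fibres are compact copies of $X$. The compactness mechanism then diverges. You show that $p$ is a \emph{perfect} map: closedness follows from Kuratowski's lemma (projection along a compact factor is closed) combined with the fact that a quotient map sends saturated closed sets to closed sets --- and your verification that $S=\mathrm{pr}_2(q^{-1}(F))$ is $H$-saturated is exactly right, since $q^{-1}(F)$ is a union of $H$-orbits. Your concluding tube argument is the standard proof that a perfect preimage of a compact space is compact. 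The paper instead exploits the group action more heavily: it lifts the $G$-action to the hyperspace of compact subsets of $X\times_HG$ with the Vietoris topology, notes that $G$ acts transitively on the family $\mathcal F$ of fibres with $H$ stabilizing the base fibre, so $\mathcal F$ is a continuous image of the compact space $G/H$ and hence Vietoris-compact; compactness of $X\times_HG=\bigcup\mathcal F$ then follows because the union of a Vietoris-compact family of compact sets is compact. What each approach buys: yours is more elementary and self-contained --- pure point-set topology, no hyperspace machinery, no use of the $G$-action beyond defining $p$, and it does not even need the Hausdorffness of $X\times_HG$ established before the lemma; the paper's is shorter once the Vietoris-topology facts are granted, and makes the homogeneity of the fibration transparent. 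Both proofs work for arbitrary topological groups, which matters here: as you correctly note at the end (and as the paper's subsequent remark explains via Bourbaki and the free-topological-group example), the alternative of producing a compact $D\subseteq G$ with $HD=G$ would require completeness-type hypotheses that co-compactness alone does not supply.
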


\begin{proof}
The second coordinate projection $X\times G\to G$ gives rise to an open map between the quotient spaces of $H$-orbits: $\pi\colon X\times_HG\to H\backslash G$. This map sends $[x,g]$ to the coset $Hg$. Define an action of $G$ on the right coset space $H\backslash G$ by $\gamma\cdot Hg=Hg\gamma^{-1}$. Then the map $\pi$ is $G$-equivariant: $[x,g\gamma^{-1}]$ goes to $Hg\gamma^{-1}$. 

Each fibre of $\pi$ is homeomorphic to $X$, hence compact. (Enough to check for the fibre over the coset $H$, where a homeomorphism is established by $x\mapsto [x,e]$.) The continuous action of $G$ on $X\times_HG$ lifts to a continuous action of $G$ on the set of all compact subsets of $X\times_HG$ equipped with the Vietoris topology. This action, restricted to the family $\mathcal F$ of all fibres $\pi^{-1}(Hg)$, $g\in G$, is transitive. The subgroup $H<G$ fixes the fibre $\xi=\pi^{-1}(H)$. Since $G/H$ is compact, the orbit map $G\ni g\mapsto g\cdot \xi$ factors through a continuous equivariant map from $G/H$ onto the orbit of $\xi$. Consequently, the family $\mathcal F$ of all fibres of $\pi$ is compact in the Vietoris topology. Hence the union of elements of this family is compact. But this union is $X\times_HG$.
\end{proof}

\begin{remark}
The author has learned about the result from the anonymous referee of the paper and could not find a reference in this generality. Here is a justification for what appears to be a more complicated proof than necessary.

In the classical case where $G$ is a Lie group, the proof follows from the existence of a local section $s\colon V\to G$, where $V$ is open in $H\backslash G$. The composition of the map $v\mapsto (x,s(v))$ (for some fixed $x\in X$) with the quotient map $q\colon X\times G \to X\times_H G$ is a local section of $\pi$, making $X\times_HG$ a locally trivial bundle over $H\backslash G$ with a compact fibre. 

This, however, cannot be assumed in a more general situation. An example by Karube \cite{karube} shows that the fibration $X\times_HG$ is not in general locally trivial over $H\backslash G$ even if $G$ and $X$ are compact. Indeed, let $G=\T^\N$ be the infinite countable power of the circle rotation group, and let $H$ be the countable power of the subgroup of $\T$ of order two. Let $H$ act on itself, $X=H$, by left translations. The total space $X\times_HG$ is seen to be homeomorphic to $G$, thus locally path-connected, so cannot be locally trivial with a totally disconnected fibre $X=H$.

A more general approach is to use the notion of a syndetic subgroup. Recall that $H$ is syndetic in $G$ if there is a compact set $K$ with $HK=G$ (of course $HK$ can be replaced with $KH$). Now assume that $H$ is syndetic in $G$.
The restriction of $q\colon X\times G \to X\times_H G$ to the compact set $X\times K$ is onto, proving the compactness of the induced $G$-space (see Gottschalk \cite{gottschalk}, p. 123, (6)). 

Every syndetic subgroup is clearly co-compact. The converse is easily verified  for locally compact groups. Moreover, it is true for completely metrizable groups, due to Bourbaki's result (\cite{bourbaki}, \S 2, n 10, Proposition 18): if $q\colon X\to Y$ is a continuous open map from a complete metric space to a Hausdorff space, then every compact subset $K\subset Y$ is an image under $q$ of a compact subset of $X$. More generally, the result can be extended over \v Cech-complete spaces $X$, see \cite{Arh1}. Thus, the statement of Lemma \ref{l:xxhg} holds for \v Cech-complete groups, in particular, Polish groups.

However, for more general topological groups $G$ co-compactness of a subgroup $H$ does not imply $H$ being syndetic, even assuming $G$ complete in the stronger sense of Weil (in the one-sided uniformity). Here is an example.

Recall that for a Tychonoff space $X$ the free topological group on $X$, denoted $F(X)$, is just the free group in the algebraic sense equipped with the finest group topology inducing the given topology on $X$. It is Hausdorff. (See \cite{Arh}, \S 4.)
Take now as $X$ a copy of the circle rotation group, $\T$, equipped with a topology whose points are all discrete except the identity, whose filter of neighbourhoods is the same as in the usual topology of $\T$. The identity map $X\to\T$ is continuous, so it extends to a continuous surjective group homomorphism, $h$, from the free topological group on $X$ onto $\T$. Moreover, the homomorphism $h$ is a quotient map, because the standard topology on $\T$ is the finest group topology making $h$ continuous. As a consequence, the closed subgroup $H=h^{-1}(1)$ is co-compact in $G=F(X)$, because the abelian topological factor-group $G/H$ is isomorphic to $\T$. 
For every completely regular space $X$, one can represent $F(X)$ as the union of a countable family of subspaces homeomorphic to open subspaces of suitable finite powers of $X$ (\cite{Arh}, (5.1)). It follows that every compact subspace of $F(X)$ is the union of countably many subspaces homeomorphic to compact subspaces of finite powers of $X$. Each finite power $X^n$ of our space $X$ is obviously a scattered space, that is, contains no subspace that is perfect in itself. A compact subspace of the metrizable space $X^n$ is second-countable, and being scattered, is countable by a result of Cantor--Bendixson--Hausdorff (\cite{engelking}, Exercise 1.7.11; \cite{BK}, Theorem 6.4).
Hence all compact subspaces of $F(X)$ are countable. This implies $H$ is not syndetic in $F(X)$. Finally, notice that, since $X$ is metrizable, $F(X)$ is complete in the Weil sense \cite{U}.
%
\label{rem:induced}
\end{remark}

\begin{proof}[Proof of Theorem \ref{th:coco}]
Let $G$ be an amenable Polish group, and let $H\triangleleft G$ be a closed normal subgroup such that the topological group $K=G/H$ is compact. Let $H$ act continuously on a compact space $X$. We will verify that $X$ admits an invariant regular Borel probability measure. Since $H$ is Polish, it is well known and easily seen that one can assume $X$ to be metrizable. (In fact, one can take as $X$ the Hilbert cube $\I^{\aleph_0}$, \cite{AMP}, Th\'eor\`eme 3.9.) Form the induced $G$-space $Y=X\times_HG$. It is compact (by Lemma \ref{l:xxhg}) and metrizable as well. The fibres of the canonical $G$-equivariant quotient map $q\colon Y= X\times_HG\to H\backslash G$ are the translations of copies of $X$: $q^{-1}(Hg)=g^{-1}\cdot X$, where $X$ is embedded into $Y$ via $X\ni x\mapsto [x,e]$. As $H$ is normal in $G$, each fibre is $H$-invariant: for every $g\in G$, $x\in X$ and $h\in H$
\begin{equation}
h\cdot [x,g] = [x,gh^{-1}] = [x,gh^{-1}g^{-1} g]=
[ghg^{-1}x,g]\in q^{-1}(Hg).
\label{eq:haction}
\end{equation}

Since $G$ is amenable, there is a $G$-invariant Borel probability measure $\mu$ on $Y=X\times_HG$. Denote by $q_{\ast}\mu$ the direct image of $\mu$ under $q$, which is a $G$-invariant probability measure on $H\backslash G$.
The Rokhlin disintegration theorem (see e.g. Theorem 5.3.1 in \cite{AGS}) implies: there is a family of Borel probability measures $(\mu_{Hg})$ supported on $q_{\ast}\mu$-almost every fibre of $q$
with the property that, given any bounded Borel function $f$ on $Y$,
\begin{equation}
\int_Y f(x)\,d\mu(x) = \int_{H\backslash G} d q_{\ast}\mu \int_{q^{-1}(Hg)} f(y)\,d\mu_{Hg}(y).
\label{eq:rokhlin}
\end{equation}
Moreover, such a family is essentially unique. 

Choose a Borel selector $s\colon H\backslash G\to Y$, that is, a Borel measurable map with $s(Hg)\in q^{-1}(Hg)$ for all $g$ (\cite{BK}, Theorem 12.16.) For simplicity, we will denote $s(Hg)=\bar g$. 
The selector allows us to canonically identify each fibre $q^{-1}(Hg)$ with $X=q^{-1}(H)$ as compact $H$-spaces, through the translation by $\bar g$.

Fix $h\in H$. Because of invariance, the translate $h\cdot\mu$ of $\mu$ by $h$ equals $\mu$. Performing a change of variables and using Eq. (\ref{eq:haction}), we conclude that the family of measures $\bar gh\bar g^{-1}\cdot \mu_{Hg}$ on a.e. fibre $q^{-1}(Hg)$ forms another disintegration for $\mu$. The essential uniqueness of the disintegration implies that for $q_{\ast}\mu$-almost all $Hg$,  the measure $\mu_{Hg}$ is $\bar gh\bar g^{-1}$-invariant. Now let $H^{\prime}$ be a countable dense subgroup of $H$. For $q_{\ast}\mu$-almost all cosets $Hg$, the measure $\mu_{Hg}$ is invariant under the subgroup $\bar gH^{\prime}\bar g^{-1}$, dense in $H$. Fix any such $Hg$ and transfer the measure $\mu_{Hg}$ to $X=\mu_H$ by means of the translation by $\bar g$ as mentioned above. We get a Borel probability measure, $\bar g\cdot\mu_{Hg}$, on $X$ that is invariant under a dense subgroup of $H$. The action of $H$ on $X$ is continuous, and now the Stone representation theorem, identifying probability measures with states on $C(X)$, leads to conclude that $\bar g\mu_{Hg}$ is invariant under all of $H$, and we are done.
\end{proof}

\begin{remark}
One cannot repeat the proof word for word for more general topological groups. The Rokhlin decomposition theorem is valid in a great generality (Pachl \cite{pachl}, see also Fremlin (\cite{F}, Theorem 452I). Unfortunately, the results about the uniqueness of measure disintegration seem to only be known for the standard Lebesgue measure space, even if the treatise \cite{F} does not present a counter-example either.

However, we believe that this difficulty can be circumvented. Constructing $X$ and $Y$ as in the above proof, it is enough to show that every countable subgroup $H_1$ of $H$ admits a regular Borel probability measure on $X$ that is $H_1$-invariant. And with such a subgroup fixed, it appears to us that one can reduce the space $Y$ and the fibration to the metrizable case using the technique of inverse spectra (see e.g. \cite{AMP}).
\label{rem:spectra}
\end{remark}

Let us finish the proof of Theorem \ref{th:main}.
The based loop group $H^1_0(\s^1,K)$ is a normal co-compact subgroup of $H^1_0(\I,K)$ with the quotient group $K$, so it amenable by the above Theorem \ref{th:coco}. The free loop group $H^1(\s^1,K)$ contains the based loop group $H^1_0(\s^1,K)$ as a normal subgroup, with the compact quotient isomorphic to $K$, and so is amenable by \cite{E}, II.1. 

To finish the proof of Theorem \ref{th:bi}, note that
both groups $H^1_0(\s^1,K)$ and $H^1(\s^1,K)$ are also skew-amenable by \cite{P20}, so both of them admit bi-invariant means on the right uniformly continuous bounded functions, as per Lemma \ref{l:amskew}.

\section*{Acknowledgements} The author is indebted to the anonymous referee of this paper who in particular suggested using the induced action as a means to considerably simplify the original proof of Theorem \ref{th:coco}. F. Martin Schneider has made a number of useful remarks on the original version. Thanks go to Ugo Bruzzo and to Vladimir V. Uspenskij for making to the author known, among others, the results of, respectively, Karube and Bourbaki, referred to in Remark \ref{rem:induced}.

During the preparation of this article, the author was supported by the DCR-A fellowship 300050/2022-4 of the Program of Scientific and Technological Development of the State of Para\'\i ba, Brazil offered by CNPq and FAPESQ.

\end{document}